\definecolor{colorcita}{RGB}{21,86,130}
\definecolor{colorref}{RGB}{5,10,177}
\definecolor{colorweb}{RGB}{177,6,38}
\newtheorem{theorem}{Theorem}[section]
\newtheorem{proposition}[theorem]{Proposition}
\newtheorem{lemma}[theorem]{Lemma}
\newtheorem{remark}[theorem]{Remark}
\newtheorem{definition}[theorem]{Definition}
\newcommand{\zN}{\mathbb N}
\renewcommand{\H}{H(\mathbb{C})}
\newcommand{\C}{\mathbb C}
\newcommand{\sub}{\subseteq}
\newcommand{\f}{\frac}
\newcommand{\F}{\mathcal {F}}
\renewcommand{\l}{\left(}
\renewcommand{\r}{\right)}
    \title{Disjoint hypercyclicity, Sidon sets and weakly mixing operators }
\begin{document}
\keywords{}
\subjclass[2010]{
47A16, %Cyclic vectors, hypercyclic and chaotic operators
37B99, %Topological Dynamics
11B99 % Number Theory, sequences and sets
}
    %37B20  	%Notions of recurrence and recurrent behavior in dynamical systems
	
	%37A45  %	Relations between ergodic theory and number theory
	
	%37A25  %	Ergodicity, mixing, rates of mixing
		%11B25 % 	Arithmetic progressions
	
	%	47B37  	%Linear operators on special spaces (weighted shifts, operators on sequence spaces, etc.)
	%47H60,  Multilinear and polynomial operators
	%37F10, Complex dynamical systems - Polynomials; rational maps; entire and meromorphic functions
	%30D20, %Functions of a complex variable, Entire functions, general theory 
	%30K99  % Functions of a complex variable -Universal holomorphic functions -None of the above, but in this section
	%46E50 %  	Spaces of differentiable or holomorphic functions on infinite-dimensional spaces
	%46T25%    	Functional analysis	Holomorphic maps
	%32H50.  %Several complex variables and analytic spaces	Iteration problems
	%32A19 % 	Holomorphic functions of several complex variables  	Normal families of functions, mappings
%15A69 %Multilinear algebra, tensor products,
%46G25% 	(Spaces of) multilinear mappings, polynomials 
%46G20% 	Infinite-dimensional holomorphy

\begin{abstract}
We prove that a finite set of natural numbers $J$ satisfies that $J\cup\{0\}$ is not Sidon if and only if for any operator $T$, the disjoint hypercyclicity of $\{T^j:j\in J\}$ implies that $T$ is weakly mixing. As an application we show the existence of a non weakly mixing operator $T$ such that $T\oplus T^2\ldots \oplus T^n$ is hypercyclic for every $n$.
\end{abstract}

\author{Rodrigo Cardeccia}
\address{Instituto Balseiro, Universidad Nacional de Cuyo – C.N.E.A. and CONICET, Av.
Bustillo 9500, San Carlos de Bariloche, R8402AGP, Rep\'ublica Argentina} \email{rodrigo.cardeccia@ib.edu.ar} 
% \address{FACULTAD DE CIENCIAS EXACTAS, INGENIERIA Y AGRIMENSURA, UNIVERSIDAD NACIONAL DE ROSARIO, ARGENTINA AND CIFASIS-CONICET}
% \email{muro@cifasis-conicet.gov.ar}

\thanks{Partially supported by ANPCyT PICT 2015-2224, UBACyT 20020130300052BA, PIP 11220130100329CO and CONICET}

\maketitle
% \tableofcontents
% \newpage
\section { Introduction }
Let $X$ be a Banach space. An operator $T:X\to X$ is said to be hypercyclic provided that there is a vector $x\in X$ such that its orbit $Orb_T(x):=\{T^n(x):n\in \zN\}$ is dense in $X$. In that case, $x$ is said to be a hypercyclic vector for $T$. The study of hypercyclic operators had a lively development in the last decades. See for example the books \cite{BayMat09,GroPer11} on the subject.

% The simplest way to prove that an operator is hypercyclic is to prove that it satisfies the so called hypercyclicity criterion. An operator is said to satisfy the hypercyclicity criterion provided that there are dense sets $D_1,D_2\sub X$, a sequence $n_k$ and applications $S_{n_k}$ such that
% \begin{enumerate}
%     \item $T^{n_k}(x)\to 0$ for every $x\in D_1$;
%     \item $T^{n_k}(x)S_{n_k}(y)\to y$ and  $S_{n_k}(y)\to 0$ for every $y\in D_2$.
% \end{enumerate}

%  A linear operator is called weakly mixing provided that $T\oplus T$ is hypercyclic.  The notion is of particularly importance in linear dynamics because it is equivalent to satisfying the hypercyclicity criterion andall the known natural examples of hypercyclic operators it admits many formulations and it is usually implied by a regularly assumption on the operator plus hypercyclicity. For instance, it is equivalent to satisfying the so called hypercyclicity criterion, which is the simplest way to prove that an operator is hypercyclic.

 A linear operator is called weakly mixing provided that $T\oplus T:X\oplus X\to X\oplus X$ is hypercyclic. In the topological setting, it is simple to show examples of hypercyclic maps that are not weakly mixing, for instance, any irrational rotation of the torus. However, in the linear setting, things get more interesting as weak mixing is equivalent to the Hypercyclicity Criterion, which is the simplest way to prove that a given operator is hypercyclic. Despite its intricate form it is very simple to use. A linear operator is said to satisfy the hypercyclicity criterion provided that there are dense sets $D_1,D_2\sub X$, a sequence $(n_k)_k$ and applications $S_{n_k}$ such that
 \begin{enumerate}
     \item $T^{n_k}(x)\to 0$ for every $x\in D_1$,
     \item $S_{n_k}(y) \to 0$ and $T^{n_k}S_{n_k}(y)$ for every $y\in D_2$.
     
 \end{enumerate}
 
 Usually, the hypercyclicity criterion is equivalent or implied by a regularly assumption on the operator plus hypercyclicity. For example, hypercyclic operators having a dense set of vectors with bounded orbits and frequently hypercyclic operators are weakly mixing.  The existence of a non weakly mixing but hypercyclic operator was an open question for many years. It was posed by Herrero in the $T\oplus T$ form in 1992 \cite{Herrero92} and solved affirmatively by De La Rosa and Read in 2006 \cite{dlRRea09}. Later on, Bayart and Matheron constructed other examples in  spaces such as $\H$ or $\ell_p$ \cite{BayMat07,BayMat09Non}.  
 A  "natural" example of a hypercyclic operator that does not satisfy the hypercyclicity criterion is still unknown.

%  A linear operator is called weakly mixing provided that $T\oplus T:X\oplus X\to X\oplus X$ is hypercyclic. The notion is of particularly importance in linear dynamics because it admits many formulations and it is usually implied by a regularly assumption on the operator plus hypercyclicity. For instance, chaotic and frequently hypercyclic operators are weakly mixing and satisfying the so called hypercyclicity criterion and being hereditary hypercyclicity are equivalent to weakly mixing. 
%  All the known natural examples of hypercyclic operators satisfy the hypercyclicity criterion. The existence of a non weakly mixing but hypercyclic operator was an open question for many years. The problem was solved by affirmatively by De La Rosa and Read in 2006 \cite{dlRRea09}. Later on, Bayart and Matheron constructed other examples in  spaces such as $\H$ or $\ell_p$ \cite{BayMat07,BayMat09Non}.  

A natural question that arises is whether the hypercyclicity of $T\oplus T^2\ldots \oplus T^n$ for every $n$ implies that $T$ is weakly mixing.

\textbf{Question A.}
Let $T$ be an operator such that $T\oplus T^2\ldots \oplus T^n$ is hypercyclic for every $n$. Does $T$ satisfy the hypercyclicity criterion?

Bayart and Matheron's construction of a non weakly mixing but hypercyclic operator invites to consider disjoint hypercyclic operators. A finite set of operators $\{T_j: j\in J\} $ is called disjoint hypercyclic provided that there is a vector $x\in X$ such that $\bigoplus_{j\in J} x$ is a hypercyclic vector for $\bigoplus_{j\in J} T_j$ and it is called disjoint transitive provided that for every nonempty set $U$ and every family of nonempty open sets $\{V_j:j\in J\}$, there is $n$ such that $U\cap \bigcap_{j\in J} T^{-n}(V)\neq \emptyset$. The first to study disjoint hypercyclic operators were Bernal-Gonz\'alez and B\'es and Peris  \cite{Ber07,BesPer07}. Since then, the theory of disjoint hypercyclicity has had a huge impact. We now know that there are disjoint hypercyclic operators that are not disjoint transitive \cite{SanShk14}, the are disjoint weakly mixing operators that fail to satisfy the disjoint hypercyclicity criterion \cite{SanShk14}, there are disjoint hypercyclic operators in every infinite dimensional and separable Banach space \cite{Shk10disjoint}, there are mixing operators that are not disjoint mixing \cite{BesMarShk12}, etc.

Thus, a related question to Question A is the following.

\textbf{Question B.} Let $T$ be an operator such that  $\{T^j: 1\leq j \leq n\}$ is disjoint hypercyclic for every $n$. Does $T$ satisfy the hypercyclicity criterion? More generally, for which subsets of the natural numbers does it follow that if $\{T^j:j\in J\}$ is  disjoint hypercyclic then $T$ is weakly mixing?

The study of these questions leads to a surprising connection with the family of Sidon sets of the natural numbers: In Theorem  \ref{main theorem} we will give a complete answer to Question B by proving that $J\cup\{0\}$  is Sidon if and only if there is a non weakly mixing operator $T$ such that $\{T^j:j\in J\}$ is disjoint hypercyclic. As a corollary, we answer Question A by exhibiting a non weakly mixing operator $T$ such that $T\oplus T^2\ldots \oplus T^n$ is hypercyclic for every $n$ (Theorem \ref{product hypercyclic pero no weakly mixing}).

Recall that a subset $A=\{a_i:i\in\zN\}$ of the natural numbers is Sidon provided that all the sums $a_i+a_j$ for $i\leq j$ are different. The study of Sidon sets had a great development on the last century and is a central task in number theory and additive combinatorics. For instance, in 1941  Erd\"os and Tur\'an \cite{ErdTur41} proved that a result of Singer \cite{Sin38} implies that if $S(n)$ denotes the maximal cardinal of a Sidon set contained in $\{1,\ldots,n\}$, then the asymptotic behavior of $S(n)$ is $n^\f{1}{2}$. They also showed that $S(n)\leq n^\f{1}{2}+O(n^\f{1}{4})$. In 1969 Lindstr\"om \cite{Lin69}  proved that for all $n$, $S(n)<n^\f{1}{2}+n^\f{1}{4}+1$
and in 2010 Cilleruelo \cite{Cil10} slightly improved this result by showing that $S(n)<n^\f{1}{2}+n^\f{1}{4}+\f{1}{2}$. The question whether $S(n)<n^\f{1}{2}+o(n^\varepsilon)$ for every $\varepsilon>0$ is still an open problem posed by Erd\"os \cite{Erd94}.
% For more on Sidon sets see \cite{} and the references within. 
% \textcolor{blue}{
% Bibliowww.combinatorics.org/ojs/index.php/eljc/article/view/DS11/pdf}
 
% The converse question to Question A is which operators satisfy that $T\oplus T^2\ldots \oplus T^n$ is hypercyclic for every $n$. In \cite[Question 7.2]{CosPar12} the authors asked whether frequently hypercyclic operators satisfy this property.

% \textbf{Question C.} Let $T$ be a frequently hypercyclic operator. Is $T\oplus T^2\ldots \oplus T^n$ hypercyclic for every $n$?.

% In Theorem \ref{producto de frequentes} we give a positive answer to Question C by proving that the product of finitely many reiteratively hypercyclic operators is weakly mixing.

The paper is organized as follows. In Section \ref{preliminares} we fix notation and recall some facts about weakly mixing operators and disjoint hypercyclic operators. In Section \ref{disjoint hiper of powers} we answer Question B, by proving that a finite subset $J\sub \zN$ satisfies that  $J\cup\{0\}$ is not Sidon if and only if for every linear operator $T$ such that $\{T^j:j\in J\}$ is disjoint hypercyclic we have that $T$ is weakly mixing (Theorem \ref{main theorem}). Moreover, we construct a non weakly mixing  operator $T$ such that $\{T^j:j\in J\}$ is disjoint hypercyclic for every finite set $J$ such that $J\cup\{0\}$ is Sidon (Theorem \ref{Sidon non weakly}). In Section \ref{Seccion multihiper} we answer Question A, and exhibit a non weakly mixing operator for which $T\oplus T^2\ldots \oplus T^n$ is hypercyclic for every $n$. In Section \ref{syndetically transitive} we study syndetically transitive operators. We prove that a linear operator $T$ is syndetically transitive if and only if $T\oplus S$ is hypercyclic for every weakly mixing operator $S$ (Theorem \ref{caract syndet}) and that a linear operator is piecewise syndetically transitive if and only if $T\oplus S$ is hypercyclic for every syndetically transitive operator $S$ (Theorem \ref{caract piece}). Finally we show the existence of a frequently transitive but non weakly mixing operator (Theorem \ref{frequently transitive non weakly mixing}), which answers a question of \cite[Question 5.12]{BesMen19}.

% \textcolor{red}{habria que mmirar este paper dde GRivaux: Multiplicity of direct sums of operators on Banach spaces}

% \textcolor{red}{ver tb Diagonal points having dense orbit T.K. Subrahmonian Moothathu, en particular la pregunta de la pag 10}

\section{Preliminaries}\label{preliminares}
Throughout the paper $X$ will denote an infinite dimensional and separable Fr\'echet space and $T:X\to X$ will be a linear operator.

Given a linear operator $T$, $x\in X$ and $U,V$ nonempty open sets, the sets of hitting times $N_T(x,U)$ and $N_T(U,V)$ are defined as
$$N_T(x,U):=\{n\in\zN: T^n(x)\in U\} \quad \text{and}$$
$$N_T(U,V):=\{n\in\zN: T^n(U)\cap V\} .$$

A linear operator is said to be hypercyclic if there is $x\in X$ such that $N_T(x,U)\neq \emptyset$ for every nonempty open set $U$ and transitive if $N_T(U,V)\neq \emptyset$ for every pair of nonempty open sets $U,V$. Given a hypercyclic vector $x$ and nonempty open sets $U,V$, then we can write $N(U,V)$ as 
$$N_T(U,V)=N_T(x,V)- N_T(x,U):=\{m-n: m\in N_T(x,V),N_T(x,U) \text{ and } m\geq n\}. $$
See \cite[Lemma 4.5]{BayMat09} for a proof of this fact.

A linear operator $T$ is said to be weakly mixing provided that $T\oplus T$ is hypercyclic. The weak mixing property admits several well known equivalent formulations. For instance, a nice result due to B{\`e}s and Peris \cite{BesPer99} shows that $T$ is weakly mixing if and only if $T$ satisfies the hypercyclicity criterion if and only if $T$ is hereditarily hypercyclic. The following characterization  \cite[Proposition 1.53]{GroPer11} of weakly mixing operators will be used repeatedly. 
\begin{proposition}\label{carac weakly mixing}
A linear operator is weakly mixing if and only if for every pair of nonempty sets $U,V$ then $N_T(U,U)\cap N_T(U,V)\neq \emptyset$.
\end{proposition}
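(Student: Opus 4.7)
The forward direction is immediate: if $T$ is weakly mixing, then $T\oplus T$ is transitive, and applying transitivity to the product open sets $U\oplus U$ and $U\oplus V$ in $X\oplus X$ yields a common iteration time $n$ with $T^n(U)\cap U\neq\emptyset$ and $T^n(U)\cap V\neq\emptyset$. This is precisely the statement $n\in N_T(U,U)\cap N_T(U,V)$.

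For the converse, I first note that the hypothesis already forces $T$ itself to be transitive, since $N_T(U,V)\supseteq N_T(U,U)\cap N_T(U,V)\neq\emptyset$ for all nonempty open $U,V$. Because $T$ is a hypercyclic linear operator, it has dense range; in particular $T^{-m}(W)$ is nonempty and open for every $m$ and every nonempty open $W$. My plan is to deduce that $T\oplus T$ is transitive by first reducing to the following ``diagonal'' claim: for all nonempty open $U,W_1,W_2\subset X$, one has $N_T(U,W_1)\cap N_T(U,W_2)\neq\emptyset$.

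The reduction proceeds as follows. Given arbitrary nonempty open $U_1,V_1,U_2,V_2$, use transitivity to choose $m$ and a nonempty open $U\subseteq U_1$ with $T^m(U)\subseteq U_2$. Since $T^{-m}(V_2)$ is nonempty open, the diagonal claim applied to $(U,V_1,T^{-m}(V_2))$ yields $n$ such that $T^n(U)$ meets both $V_1$ and $T^{-m}(V_2)$. The first gives $n\in N_T(U_1,V_1)$ directly, while the second produces $x\in U$ with $T^{n+m}(x)\in V_2$, so that $y:=T^m(x)\in U_2$ satisfies $T^n(y)\in V_2$ and hence $n\in N_T(U_2,V_2)$.

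To establish the diagonal claim, I iterate the hypothesis. Applied first to $(U,W_1)$, it produces $n_1$ and the nonempty open set $U^\sharp:=U\cap T^{-n_1}(U)\subseteq U$ with $T^{n_1}(U^\sharp)\subseteq U$; a further application to $(U^\sharp,W_2)$ produces $n_2$ and a witness $y\in U^\sharp$ with $T^{n_2}(y)\in W_2$. The recurrence information $T^{n_1}(U^\sharp)\subseteq U$ is the crucial ingredient: it allows the image of the second iterate to be shifted back into $U$ via composition with $T^{n_1}$, and thereby couples the two times. The main obstacle will be this final synchronization into a single time $n$ witnessing both $N_T(U,W_1)$ and $N_T(U,W_2)$ simultaneously; I expect it to require one more judicious application of the hypothesis with a target set built from the sets already constructed, exploiting the interplay between the recurrence part $N_T(U,U)$ and the transit part $N_T(U,V)$ provided at every step.
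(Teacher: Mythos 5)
The paper itself offers no proof of this proposition --- it is quoted directly from \cite[Proposition 1.53]{GroPer11} --- so the only question is whether your argument stands on its own, and it does not quite. The forward direction is fine, and so is your reduction of the transitivity of $T\oplus T$ to the ``diagonal claim'' that $N_T(U,W_1)\cap N_T(U,W_2)\neq\emptyset$ for all nonempty open $U,W_1,W_2$. But that claim is the entire content of the converse, and you explicitly leave it unproved: you end by predicting that the synchronization of the two times ``will require one more judicious application of the hypothesis.'' Worse, the partial construction you set up does not lead there. Applying the hypothesis to $(U,W_1)$ gives $n_1\in N_T(U,U)\cap N_T(U,W_1)$, but your set $U^\sharp=U\cap T^{-n_1}(U)$ records only the recurrence half: the witness for $n_1\in N_T(U,W_1)$ is some point of $U$ that need not lie in $U^\sharp$, so the information ``time $n_1$ sends something into $W_1$'' is decoupled from everything you subsequently do with $U^\sharp$. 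A second application at $(U^\sharp,W_2)$ then produces an unrelated time $n_2$, and nothing in the construction forces a single $n$ to lie in both return sets.

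The repair is the same change of variables you already used in your reduction step, applied one level down. By transitivity pick $m$ with $U_0:=U\cap T^{-m}(W_1)\neq\emptyset$; since $T$ has dense range, $T^{-m}(W_2)$ is nonempty and open, so the hypothesis applied to the single pair $(U_0,\,T^{-m}(W_2))$ yields $k\in N_T(U_0,U_0)\cap N_T(U_0,T^{-m}(W_2))$. A witness for the first membership is an $x\in U_0\subseteq U$ with $T^{k}(x)\in U_0\subseteq T^{-m}(W_1)$, hence $k+m\in N_T(U,W_1)$; a witness for the second is a $y\in U_0\subseteq U$ with $T^{k+m}(y)\in W_2$, hence $k+m\in N_T(U,W_2)$. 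Thus the diagonal claim follows from one application of the hypothesis, and your reduction then finishes the proof. The idea you were missing is that $W_1$ must be folded into the source set (as $U\cap T^{-m}(W_1)$) \emph{before} invoking the hypothesis, rather than folding $U$ into itself afterwards.
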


\begin{definition}
A set of operators $\{T_i:X\to X:i\in I\}$ is said to be disjoint hypercyclic (or $d$-hypercyclic) provided that there is $x\in X$ such that for every family of nonempty open sets $\{U_i:i\in I\}$, there is $n$ such that $T_i^n(x)\in U_i$ for every $i\in I$. In that case we will say that $x$ is a disjoint hypercyclic vector ( or $d$-hypercyclic vector) for $\{T_i:X\to X:i\in I\}$.  
\end{definition}

Similarly, there is a notion of $d$-transitivity.
\begin{definition}
A set of operators $\{T_i:i\in I\}$ is said to be disjoint transitive (or $d$-transitive) provided that for every nonempty open set $U$ and every family of nonempty open sets $\{U_i:i\in I\}$ there is $n$ such that $U\cap \bigcap_{i\in I} T_i^{-1}U_i$ is nonempty.
\end{definition}
It is not difficult to prove that a set of disjoint transitive operators is a set of disjoint hypercyclic operators with a dense set of $d$-hypercyclic vectors \cite[Proposition 2.3]{BesPer07}. However, the converse is false and there are $d$-hypercyclic operators without a dense set of $d$-hypercyclic vectors \cite[Corollary 3.5]{SanShk14}. 

The next proposition seems to be original. It establishes the equivalence between disjoint hypercyclicity and disjoint transitivity for commuting operators.

\begin{proposition}\label{disjoint conmutan}
Let $\{T_i:1\leq i\leq N\}$ be a set of operators such that $T_1$ commutes with $T_i$ for every $2\leq i\leq N$. Then $\{T_i:1\leq i\leq N\}$ is disjoint hypercyclic if and only if it is disjoint transitive. 
\end{proposition}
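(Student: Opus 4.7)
The forward implication is the standard fact from Bès--Peris \cite[Proposition 2.3]{BesPer07} recalled in the text: $d$-transitivity yields a dense $G_\delta$ of $d$-hypercyclic vectors, and in particular $d$-hypercyclicity. So the content lies in the reverse implication, where the commutation hypothesis must do all the work.

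The plan is as follows. Fix a $d$-hypercyclic vector $x$, a nonempty open set $U$, and a family of nonempty open sets $U_1,\ldots,U_N$; the goal is to exhibit $y\in U$ and $n\in\zN$ with $T_i^n(y)\in U_i$ for every $i$. First, note that $x$ is in particular hypercyclic for $T_1$ (take $V_1=U$ and any nonempty open $V_i$ for $i\ge 2$ in the $d$-hypercyclicity condition), so choose $k$ with $T_1^k(x)\in U$. The candidate will be $y:=T_1^k(x)$.

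Next, since $T_1$ is hypercyclic, it has dense range, and hence so does $T_1^k$. Consequently each open set
\[
V_i := (T_1^k)^{-1}(U_i), \qquad 1\leq i\leq N,
\]
is nonempty. Apply the $d$-hypercyclicity of $x$ to the family $\{V_1,\ldots,V_N\}$ to obtain an integer $n$ with $T_i^n(x)\in V_i$ for every $i$. Now using that $T_1$ commutes with each $T_i$ (and trivially with itself), for every $i$ one has
\[
T_i^n(y) \;=\; T_i^n T_1^k(x) \;=\; T_1^k T_i^n(x) \;\in\; T_1^k(V_i) \;\subseteq\; U_i,
\]
so $y\in U\cap\bigcap_{i=1}^N T_i^{-n}(U_i)$, which proves $d$-transitivity.

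The main obstacle, and the place where the commutation hypothesis is essential, is precisely the identity $T_i^n T_1^k = T_1^k T_i^n$ in the displayed line above: it lets us push the factor $T_1^k$ past $T_i^n$ so that the $d$-hypercyclicity of $x$ (applied with the shifted open sets $V_i$) translates into the $d$-transitivity condition for the original open sets $U_i$. A secondary subtlety is ensuring the $V_i$ are nonempty, which requires the dense range of $T_1^k$; this is free from hypercyclicity of $T_1$, so no extra assumption is needed.
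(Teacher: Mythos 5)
Your proof is correct and follows essentially the same route as the paper: both arguments hinge on the observation that, by commutativity and the dense range of $T_1^k$, the translate $T_1^k(x)$ of a $d$-hypercyclic vector still witnesses the $d$-hypercyclicity condition, which yields a $d$-hypercyclic vector inside any prescribed open set $U$. The only cosmetic difference is that the paper phrases this by showing $T_1^k(x)$ is again a $d$-hypercyclic vector (citing that $T_1^k$ is hypercyclic, hence of dense range), whereas you pull the target sets $U_i$ back through $T_1^k$ directly; the two are interchangeable.
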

\begin{proof}
If $\{T_i:1\leq i\leq  N\}$ is disjoint transitive then it is disjoint hypercyclic by \cite[Proposition 2.3]{BesPer07}.

Suppose that $\{T_i:1\leq i\leq N\}$ is disjoint hypercyclic and let
$x$ be a disjoint hypercyclic vector for $\{T_i:1\leq i\leq N\}$. We will prove that, for every $n$, $T_1^n(x)$ is a disjoint hypercyclic vector for $\{T_i:1\leq i\leq N\}$. Let $n\in\zN$,

As the operators commute, it follows that

$$Orb_{T_1\oplus \ldots \oplus T_N}(\bigoplus_{i=1}^N T_1^n(x))=
\bigoplus_{i=1}^N T_1^n \l Orb_{T_1\oplus \ldots \oplus T_N}(\bigoplus_{i=1}^N x)\r.$$

The operator $T_1$ is hypercyclic and hence $T_1^n$ is also hypercyclic \cite[Theorem 6.2]{GroPer11}. This implies that  $Rg(\bigoplus_{i=1}^N T_1^n)$ is dense in $\bigoplus_{i=1}^N X$. On the other hand, $\bigoplus_{i=1}^N  Orb_{T_1\oplus \ldots \oplus T_N}(\bigoplus_{i=1}^N x)$ is also dense in $\bigoplus_{i=1}^N X$.  We conclude that $T_1^n(x)$ is a disjoint hypercyclic vector.

Consider now $U$ and $V_1,\ldots V_N$ nonempty open sets. Since $x$ is a hypercyclic vector for $T_1$, there is $n_1$ such that $y=T_1^{n_1}(x)\in U$. We have just proved that $y$ is a $d$-hypercyclic vector for $\{T_i:1\leq i\leq N\}$. Thus, there is $n$ such that $T_i^n(y)\in V_i$ for every $1\leq i\leq N$. It follows that $y\in U\cap \bigcap_{i=1}^N T_i^{-n}V_i.$
\end{proof}

\section{Disjoint hypercyclicity of powers of an operator and weakly mixing operators}\label{disjoint hiper of powers}
In this section, we prove the main Theorem of the paper. It involves a surprising connection between disjoint hypercyclicity, weakly mixing operators and Sidon sets of the natural numbers. We prove that a finite set of natural numbers $J$ satisfies that $J\cup \{0\}$ is not Sidon if and only if for any operator $T$, the disjoint hypercyclicity of $\{T^j:j\in J\}$ implies that $T$ is weakly mixing (Theorem \ref{main theorem}). Moreover, we construct a non weakly mixing operator $T:\ell_1\to \ell_1$ such that $\{T^{j}:j\in J\}$ is disjoint hypercyclic for every finite set $J$ such that $J\cup\{0\}$ is Sidon (Theorem \ref{Sidon non weakly}). This allows us to generalize Theorem \ref{main theorem} to infinite sets: an infinite set $S\sub\zN$ satisfies that $S\cup\{0\}$ is Sidon if and only if there is a non weakly mixing operator $T:\ell_1\to\ell_1$ such that for every finite subset $J\sub S$ we have that $\{T^j:j\in J\}$ is disjoint hypercyclic (Theorem \ref{infinite Sidon}).

\begin{definition}
A sequence of integers numbers $(j_k)$ (or a set $J=\{j_k:k\in\zN\}$) is  said to be Sidon, provided that all the sums $j_k+j_{k'}$ with  $k\le k'$ are different.
\end{definition}

In this note, we will only consider Sidon subsets of the positive numbers that contain $0$. Thus, for example, $\{2,4\}$ is Sidon but $\{0,2,4\}$ is not Sidon. 

\begin{theorem}\label{main theorem}
Let $J\sub \zN$ be a finite set. Then $J\cup \{0\}$ is Sidon if and only if there exists a non weakly mixing operator $T:\ell_1\to \ell_1$ such that $\{T^j:j\in J\}$ is disjoint hypercyclic.
\end{theorem}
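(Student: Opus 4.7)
The theorem splits into two implications of very different difficulty. The ``easy'' direction says that when $J \cup \{0\}$ fails to be Sidon, $d$-hypercyclicity of $\{T^j : j \in J\}$ must upgrade to weak mixing of $T$. My plan here is to combine two tools already at hand: because the powers of $T$ commute, Proposition~\ref{disjoint conmutan} turns $d$-hypercyclicity into $d$-transitivity (so for any nonempty open set $U$ and any family $(V_j)_{j \in J}$ of nonempty open sets there exist $n \geq 1$ and $y \in U$ with $T^{jn}(y) \in V_j$ for each $j \in J$), and Proposition~\ref{carac weakly mixing} reduces weak mixing to exhibiting, for any pair $U, V$, an element in $N_T(U,U) \cap N_T(U,V)$.

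Given such $U, V$, I unwind the non-Sidon hypothesis to fix $a, b, c, d \in J \cup \{0\}$ with $a \leq b$, $c \leq d$, $a \leq c$, $a + b = c + d$ and $(a,b) \neq (c,d)$; a short check forces $a < c \leq d < b$. If $a \geq 1$, all four indices lie in $J$ and I would apply $d$-transitivity with $V_a = V_c = V_d := U$ and $V_b := V$ (and any nonempty open choice for the remaining indices): the pair $T^{an}(y), T^{dn}(y) \in U$ puts $(d-a)n$ into $N_T(U,U)$, while $T^{cn}(y) \in U$ together with $T^{bn}(y) = T^{(b-c)n}(T^{cn}(y)) \in V$ puts $(b-c)n$ into $N_T(U,V)$, and the relation $a + b = c + d$ makes these the same positive integer. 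If $a = 0$, then $b = c + d$ with $c, d \in J$; running $d$-transitivity with $V_c = V_d := U$ and $V_b := V$, the anchor $y \in U$ together with $T^{cn}(y) \in U$ gives $cn \in N_T(U,U)$, while $T^{dn}(y) \in U$ together with $T^{bn}(y) = T^{cn}(T^{dn}(y)) \in V$ gives $cn = (b-d)n \in N_T(U,V)$. The hypothesis that the family commutes is what lets me work with the anchor $y \in U$; without it, one could harvest elements of $N_T(U,V)$ from a $d$-hypercyclic orbit but not, for the same index, elements of $N_T(U,U)$.

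The converse implication demands the actual construction of a non-weakly-mixing operator on $\ell_1$ whose powers $\{T^j : j \in J\}$ are $d$-hypercyclic for every finite $J$ with $J \cup \{0\}$ Sidon, and my plan is to defer it to Theorem~\ref{Sidon non weakly}, which carries out exactly this construction in a uniform way. This is the real obstacle in the paper: one must design, in the spirit of the De la Rosa--Read and Bayart--Matheron examples, a hypercyclic-but-not-weakly-mixing operator on $\ell_1$ whose resonances among its powers avoid precisely the Sidon-breaking coincidences that the easy direction above shows would otherwise force weak mixing.
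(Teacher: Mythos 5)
Your proposal is correct. The difference from the paper lies entirely in the forward (``not Sidon $\Rightarrow$ weakly mixing'') direction: the paper's Theorem~\ref{Sidon 1} works directly with a $d$-hypercyclic vector $x$ and the identity $N_T(U,V)=N_T(x,V)-N_T(x,U)$, and because $T^0=\mathrm{Id}$ cannot belong to a $d$-hypercyclic family it must split off the case $j_1=0$ into two further subcases, in each of which it builds ad hoc hypercyclic tuples such as $(x,T^{k}(x),x)$ or $(T^{jk}(x),x)$ for the relevant direct sums. You instead pass through Proposition~\ref{disjoint conmutan} to upgrade $d$-hypercyclicity to $d$-transitivity, and the anchor point $y\in U$ silently plays the role of the missing $T^0$ coordinate; your normalization $a<c\le d<b$ together with the single identity $(d-a)n=(b-c)n$ then disposes of all cases (including $a=0$ and the degenerate $c=d$) in one computation, feeding directly into Proposition~\ref{carac weakly mixing}. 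This is a genuinely more uniform argument, at the small price of invoking Proposition~\ref{disjoint conmutan}, which the paper proves in the preliminaries precisely for commuting families, so nothing is circular. For the converse you defer to the construction of Theorem~\ref{Sidon non weakly}, which is exactly what the paper's own one-line proof of the statement does, so there is no gap there. The only implicit point (shared with the paper) is that the transitivity witness $n$ can be taken $\ge 1$; this is automatic since every $T^{jn}(x)$ is again $d$-hypercyclic, so the relevant return-time sets are infinite.
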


The proof is an immediate consequence of Theorems \ref{Sidon 1} and \ref{Sidon non weakly} below.

\begin{theorem}\label{Sidon 1}
Let $X$ be a Banach space,
$J\sub \zN$ such that $J\cup\{0\}$ is not Sidon and $T:X\to X$ be a linear operator such that \{$T^{j}:j\in J\}$ is disjoint hypercyclic. Then $T$ is weakly mixing.
\end{theorem}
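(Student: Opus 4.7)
My plan is to reduce weak mixing of $T$ to weak mixing of a suitable power $T^{c}$ (Ansari's theorem \cite[Theorem~6.2]{GroPer11} applied to $T\oplus T$ shows these are equivalent, since $(T\oplus T)^{c}=T^{c}\oplus T^{c}$) and then verify weak mixing of $T^{c}$ via Proposition~\ref{carac weakly mixing}. The exponent $c$ will come directly from the non-Sidon relation.

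Unpacking non-Sidonicity of $J\cup\{0\}$, fix $j_{1}\le j_{2}$ and $j_{3}\le j_{4}$ in $J\cup\{0\}$ with $(j_{1},j_{2})\ne(j_{3},j_{4})$ and $j_{1}+j_{2}=j_{3}+j_{4}$. After relabelling so that $j_{1}<j_{3}$, the identity of sums forces $j_{4}<j_{2}$; in particular $j_{2},j_{3},j_{4}\in J$. Setting $c:=j_{3}-j_{1}=j_{2}-j_{4}>0$ yields the two commuting factorizations
\[
T^{j_{3}}=T^{c}\circ T^{j_{1}},\qquad T^{j_{2}}=T^{c}\circ T^{j_{4}},
\]
both featuring the same factor $T^{c}$; this is the crucial payoff of the non-Sidon hypothesis.

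Fix now nonempty open sets $U,V\subseteq X$. Since all operators in $\{T^{j}:j\in J\}$ pairwise commute, Proposition~\ref{disjoint conmutan} upgrades the hypothesis to disjoint transitivity. Apply disjoint transitivity to the subfamily $\{T^{j_{i}}:1\le i\le 4,\ j_{i}>0\}$ with base open set $U$, target $V$ for the index $j_{2}$, and target $U$ for each positive index among $j_{1},j_{3},j_{4}$; this produces $u\in U$ and $n\in\zN$ with $T^{j_{2}n}(u)\in V$ and $T^{j_{i}n}(u)\in U$ for $i\in\{1,3,4\}$, the condition for $i=1$ reading simply $u\in U$ when $j_{1}=0$. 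The two factorizations now give
\[
T^{cn}\bigl(T^{j_{1}n}(u)\bigr)=T^{j_{3}n}(u)\in U \quad\text{and}\quad T^{cn}\bigl(T^{j_{4}n}(u)\bigr)=T^{j_{2}n}(u)\in V,
\]
while $T^{j_{1}n}(u),T^{j_{4}n}(u)\in U$. Hence $n\in N_{T^{c}}(U,U)\cap N_{T^{c}}(U,V)$, so $T^{c}$ is weakly mixing by Proposition~\ref{carac weakly mixing}, and therefore so is $T$.

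I do not expect a real technical obstacle; once the two factorizations are in hand, the rest is a direct unwinding of the definitions of disjoint transitivity and of weak mixing. The only place requiring any care is the degenerate case $j_{1}=0$, where $T^{j_{1}}=I$ is not in the disjoint hypercyclic family, but this is absorbed by choosing $U$ as the base open set in the application of disjoint transitivity so that $T^{j_{1}n}(u)=u\in U$ holds for free. The conceptual heart of the argument is the observation that the non-Sidon relation $j_{1}+j_{2}=j_{3}+j_{4}$ is \emph{exactly} the data needed to express two of the powers appearing in the disjoint-transitivity conclusion as $T^{c}$ composed with two more such powers, and that this is what allows the weak-mixing conclusion for $T^{c}$.
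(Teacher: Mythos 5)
Your proof is correct. The key mechanism is the same as the paper's --- feed the non-Sidon relation $j_1+j_2=j_3+j_4$ into the return times of a disjoint hypercyclic vector so as to produce a common element of $N(U,U)$ and $N(U,V)$ --- but your execution differs in a way that genuinely streamlines the argument. The paper works with a $d$-hypercyclic vector $x$ that need not lie in $U$, computes via the identity $N_T(U,V)=N_T(x,V)-N_T(x,U)$, and is therefore forced into a three-case analysis ($j_1\neq 0$; $j_1=0$ with the two middle exponents distinct; $j_1=0$ with them equal), the latter cases requiring ad hoc ``hypercyclic tuple'' constructions such as $(x,T^k x,x)$. You instead invoke Proposition \ref{disjoint conmutan} to upgrade the hypothesis to disjoint transitivity, which lets you choose the base point $u$ inside $U$ from the start; the degenerate case $j_1=0$ then costs nothing, since $T^{j_1 n}(u)=u\in U$ automatically, and the two factorizations $T^{j_3}=T^{c}\circ T^{j_1}$, $T^{j_2}=T^{c}\circ T^{j_4}$ with $c=j_3-j_1=j_2-j_4$ handle all configurations at once. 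Your detour through $T^{c}$ is harmless but unnecessary: $n\in N_{T^{c}}(U,U)\cap N_{T^{c}}(U,V)$ is literally the statement $cn\in N_T(U,U)\cap N_T(U,V)$, so Proposition \ref{carac weakly mixing} applies to $T$ directly and the appeal to Ansari's theorem (of which you only use the trivial direction anyway) can be dropped. Two pedantic points, neither a real gap since the paper shares them: Proposition \ref{disjoint conmutan} is stated for finite families, so if $J$ is infinite you should apply it to the finite subfamily $\{T^{j_i}:j_i>0\}$, which is $d$-hypercyclic because subfamilies of $d$-hypercyclic families are; and one should observe that the $n$ produced by disjoint transitivity can be taken $\geq 1$, since the relevant orbit is dense and hence visits every nonempty open set infinitely often.
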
 

\begin{theorem}\label{Sidon non weakly}
There exists a non weakly mixing operator $T:\ell_1\to\ell_1$ such that $\{T^{j}:j\in J\}$ is disjoint hypercyclic  for every finite set $J\sub \zN$ such that $J\cup \{0\}$ is Sidon. 
\end{theorem}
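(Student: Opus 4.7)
The plan is to build $T = I + B$ on $\ell_1$, where $B$ is a weighted backward shift with a very sparse support, in the spirit of the Bayart--Matheron construction \cite{BayMat07,BayMat09Non} of a non weakly mixing but hypercyclic operator on $\ell_1$. Partition $\zN$ into very rapidly separated blocks $I_n=[a_n,a_{n+1})$ and choose the weights of $B$ so that (i) the weights essentially vanish at the block boundaries, so that $T^m$ almost preserves each block up to small error in $\ell_1$-norm, and (ii) within each block the action of $T^m=(I+B)^m$ on the basis vectors $e_{a_n+s}$ is governed by a controlled Pascal-triangle pattern. The growth rates of the $a_n$ and of the weights will be tuned during the inductive construction below.

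Next, I would construct the disjoint hypercyclic vector $x=\sum_n x_n$, with $x_n$ supported in $I_n$, by infinite induction over an enumeration of all ``requests'' $\mathcal R_n=(J_n,(y_{n,j})_{j\in J_n})$, where $J_n$ is a finite subset of $\zN$ with $J_n\cup\{0\}$ Sidon and the $y_{n,j}$ range over a dense countable set of finitely supported rational vectors. At stage $n$, the goal is to pick $x_n$ and a scale $\rho_n$ so that $T^{j\rho_n}x_n$ approximates $y_{n,j}$ (translated to a chosen anchor in $I_n$) for every $j\in J_n$, while $T^{j\rho_n}x_m$ is negligible in $I_n$ for $m\ne n$ and every $j\in J_n$. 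The natural ansatz is
$$x_n=\sum_{j\in J_n\cup\{0\}}\alpha_{n,j}\,v_{n,j},$$
where the $v_{n,j}$ are local test vectors placed so that $T^{k\rho_n}v_{n,j}$ concentrates near the anchor precisely when $j=k$, with cross contributions ($j\neq k$) landing at positions offset by $(j-k)\rho_n$. Here is where the Sidon hypothesis on $J_n\cup\{0\}$ enters: it guarantees that the displacement pattern produced by $\{T^{k\rho_n}v_{n,j}\}_{j,k\in J_n\cup\{0\}}$ has no coincidences between distinct pairs, so the induced linear system on $(\alpha_{n,j})$ is triangular with controlled inverse, yielding a solution whose $\ell_1$ mass can be made summable across $n$ — hence $x\in\ell_1$.

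To ensure that $T$ is not weakly mixing, I would use Proposition \ref{carac weakly mixing}: pick a small open set $U$ around $0$ and an open set $V$ around a specific nonzero vector (e.g.\ $2 e_0$), and show that $N_T(U,U)\cap N_T(U,V)=\emptyset$. Following the Bayart--Matheron bookkeeping, I would monitor a single coordinate functional $\varphi$ along the orbit $\varphi(T^m x)$ and use the sparsity of the $a_n$ to reduce the scalar orbit to block-by-block contributions; the $\alpha_{n,j}$ chosen from the Sidon linear systems, being tightly constrained in size, can be further adjusted (by a standard sign/phase trick) so that the scalar orbit never lands in the forbidden intersection. The main obstacle is precisely the coupling between the two requirements: $d$-hypercyclicity for every Sidon set demands great flexibility in each $x_n$, while failing weak mixing imposes a rigid global constraint on the orbit of $x$. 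The Sidon hypothesis is what reconciles the two, because it is the exact combinatorial condition limiting the ``collisions'' between contributions from different targets — the same condition that Theorem \ref{Sidon 1} identifies as necessary — so the construction succeeds for all Sidon requests simultaneously and no further.
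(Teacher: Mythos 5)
Your proposal is a plan rather than a proof, and the step it leaves unexecuted is precisely the hard one. The entire difficulty of producing a hypercyclic, non weakly mixing operator (the De La Rosa--Read problem) lies in verifying, for a concrete operator, that some pair $U,V$ has $N_T(U,U)\cap N_T(U,V)=\emptyset$ while the operator still has a dense orbit; you dispose of this with the phrase ``a standard sign/phase trick,'' but no such standard trick exists, and you give no argument that your adjusted $\alpha_{n,j}$ can simultaneously satisfy the infinitely many approximation constraints coming from all Sidon requests \emph{and} keep the scalar orbit out of the forbidden set. The paper does not redo this verification either: it isolates from Bayart--Matheron's proof a reusable black box (Theorem \ref{Teo Bay}) stating that for \emph{any} $(\Delta_l)$-Sidon sequence $(b_n)$ of perturbation times with $\Delta_l\to\infty$, parameters $w_n,a_n,u_n$ exist making the perturbed forward shift continuous and non weakly mixing. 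The actual content of the paper's proof is then purely combinatorial: an inductive choice of dilation factors $m_{l,n}$ so that the numbers $b_{l,n,k}=m_{l,n}j_{n,k}$, ranging over all finite sets $F_n$ with $F_n\cup\{0\}$ Sidon, assemble into a single $(\Delta)$-Sidon sequence; the Sidon hypothesis on $F_n\cup\{0\}$ is exactly what makes the blocks $J_{\tilde L,\tilde N,k}$ pairwise disjoint. Your intuition that Sidon prevents ``collisions in the displacement pattern'' is the right one, but without a lemma playing the role of Theorem \ref{Teo Bay} it does not connect to a proof of non weak mixing.

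A second, more structural problem is your ansatz $T=I+B$ with $B$ a weighted backward shift. This is not the Bayart--Matheron operator (which is a weighted \emph{forward} shift perturbed by upper triangular terms $T^{b_k}(e_1)=P_k(T)(e_1)+e_{b_k}/a_k$), and it is a poor starting point for the goal: by results going back to Salas, operators of the form $I+B_w$ on $\ell_1$ are hypercyclic for essentially arbitrary bounded weights and are known to satisfy the Hypercyclicity Criterion, i.e.\ to be weakly mixing. Making the weights ``essentially vanish at block boundaries'' pushes $T$ toward $I$ plus a block-nilpotent operator, which threatens hypercyclicity itself rather than weak mixing. So even granting the bookkeeping, the class of operators you propose to search in is unlikely to contain the example. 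To repair the argument you would need to (i) switch to the forward-shift model, (ii) state and use a quantitative version of Bayart--Matheron's non-weak-mixing theorem in the form of Theorem \ref{Teo Bay}, and (iii) carry out the inductive construction of the dilation factors $m_{l,n}$ together with a choice of controlled polynomials $(P_{l,k})$ dense in every finite product $\bigoplus_{k\le n}\C[x]$, so that $e_1\oplus\cdots\oplus e_1$ is itself the $d$-hypercyclic vector at the times $m_{l,n}$.
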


We see in particular that the disjoint hypercyclicity of $\{T,T^j\}$ implies that $T$ is weakly mixing if and only if either $j=1$ or $j=2$ (But of course $\{T,T\}$ is never disjoint hypercyclic).

Since a set $S$ is Sidon if and only if every finite subset of $S$ is Sidon, Theorems \ref{Sidon 1} and \ref{Sidon non weakly} also  give a characterization for infinite subsets of the natural numbers.

\begin{theorem}\label{infinite Sidon}
Let $S\sub \zN$. Then $S\cup \{0\}$ is Sidon if and only if there is a non weakly mixing operator $T:\ell_1\to \ell_1$ such that for every finite subset $J\sub S$ we have that $\{T^j:j\in J\}$ is disjoint hypercyclic.
\end{theorem}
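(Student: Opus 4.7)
The plan is to reduce this statement directly to Theorems \ref{Sidon 1} and \ref{Sidon non weakly}, using only the elementary observation that a set $A \subseteq \zN$ is Sidon if and only if every finite subset of $A$ is Sidon (and that every subset of a Sidon set is Sidon). Applied to $A = S \cup \{0\}$, this translates to the convenient reformulation: $S \cup \{0\}$ is Sidon if and only if $J \cup \{0\}$ is Sidon for every finite $J \sub S$. Indeed, any finite $F \sub S \cup \{0\}$ embeds into some $J \cup \{0\}$ with $J \sub S$ finite, so Sidon-ness of all such $J \cup \{0\}$ transfers to every finite subset of $S \cup \{0\}$.

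For the implication $(\Rightarrow)$, assume $S \cup \{0\}$ is Sidon. Then for every finite $J \sub S$ the set $J \cup \{0\}$ is Sidon, so Theorem \ref{Sidon non weakly} supplies a single non weakly mixing operator $T : \ell_1 \to \ell_1$ such that $\{T^j : j \in J\}$ is disjoint hypercyclic for every finite $J \sub \zN$ with $J \cup \{0\}$ Sidon; this $T$ works, a fortiori, for every finite $J \sub S$. For the implication $(\Leftarrow)$, suppose such an operator $T$ exists but, towards a contradiction, $S \cup \{0\}$ is not Sidon. By the reformulation above, there exists a finite $J \sub S$ with $J \cup \{0\}$ not Sidon. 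By hypothesis $\{T^j : j \in J\}$ is disjoint hypercyclic, so Theorem \ref{Sidon 1} forces $T$ to be weakly mixing, contradicting the choice of $T$. Hence $S \cup \{0\}$ is Sidon.

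The argument is purely formal and the only ``step'' is invoking the two previously stated theorems together with the finitely-determined character of Sidon sets. The genuine technical difficulty is entirely absorbed into Theorem \ref{Sidon non weakly}: the construction there must produce one non weakly mixing operator on $\ell_1$ that simultaneously handles every finite Sidon set $J \cup \{0\}$, and it is that uniformity (rather than anything in the present deduction) which does the heavy lifting needed to pass from the finite case (Theorem \ref{main theorem}) to the infinite characterization stated here.
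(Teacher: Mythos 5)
Your proof is correct and is essentially the paper's own argument: the paper likewise disposes of this theorem in one line by noting that Sidon-ness is determined by finite subsets and then citing Theorems \ref{Sidon 1} and \ref{Sidon non weakly}, with the forward direction resting on the uniformity of the single operator produced in Theorem \ref{Sidon non weakly}. Nothing further is needed.
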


\begin{proof}[Proof of Theorem \ref {Sidon 1}]
 Suppose that there are $0\leq j_1\leq j_2 \leq j_3\leq j_4$ such that  $j_1+j_4=j_2+j_3$.
 
 Let $U,V$ be nonempty open sets. We will prove that $N_T(U,U)\cap N_T(U,V)\neq \emptyset.$ By Proposition \ref{carac weakly mixing} this implies that $T$ is weakly mixing.
 
 We will divide the proof into three cases.
 
 \textbf{First case.}
 Suppose that $j_1\neq 0$ and that $j_2<j_3$. 
 Let $x$ be a disjoint hypercyclic vector for $\{T^{j_i}:1\leq i\leq 4\}$. Hence,
 there is $n$ such that $T^{j_in}(x)\in U$ for $i\leq 3$ and $T^{j_4}(x)\in V$. Therefore, 
 $j_4n-j_2n\in N_T(x,V)-N_T(x,U)=N_T(U,V)$ and on the other hand  
 $j_4n-j_2n=j_3n-j_1n\in N_T(x,U)-N_T(x,U)=N_T(U,U)$. 
 
 If $j_1\neq 0$ and $j_2=j_3$ the proof is the same by considering $x$ a disjoint hypercyclic vector for $\{T^{j_1},T^{j_2},T^{j_4}\}$ and $n$ such that $T^{j_in}(x)\in U$ for $i\leq 3$ and $T^{j_4n}(x)\in V$.

 \textbf{Second case.} Suppose  that $j_1=0$ and that $j_2\neq j_3$.  Let $x$ be a disjoint hypercyclic vector for $\{T^{j_2},T^{j_3}, T^{j_4}\}$. 
 
 Let $k>0$ such that $T^{k}(x)\in U$ and notice that $(x,T^{j_3}(x),x)$ is a hypercyclic tuple for $T^{j_2}\oplus T^{j_3}\oplus T^{j_4}$. Indeed, $Id\oplus T^k\oplus Id$ has dense range and $Orb_{T^{j_2}\oplus T^{j_3}\oplus T^{j_4}}(x,T^k(x),x)= Id\oplus T^k \oplus Id\l Orb_{T^{j_2}\oplus T^{j_3}\oplus T^{j_4}}(x,x,x)\r$. Let $n$ such that $T^{j_2n}(x)\in U$, $T^{j_3n+k}(x)\in U$ and $T^{j_4n}(x)\in V$. Therefore $j_4n-(j_3n+k)\in N_T(x,V)-N_T(x,U)=N_T(U,V)$. On the other hand $j_4n-(j_3n+k)=j_2n-k\in N_T(x,U)-N_T(x,U)=N_T(U,U)$.  
 
 \textbf{Final case.}  Suppose that $j_1=0$, $j_2=j_3=j$ and $j_4=2j$. 
 Hence, there is a disjoint hypercyclic vector $x$ for $\{T^{j},T^{2j}\}$. 
 Let $k>0$ such that $T^{jk}(x)\in U$ and $T^{2jk}(x)\in U$. Notice that $(T^{jk}(x),x)$ is a hypercyclic tuple for $T^{j}\oplus T^{2j}$. Indeed, $T^{jk}\oplus Id$ has dense range and $Orb_{ T^{j}\oplus T^{2j}}(T^{jk}(x),x)= T^{jk}\oplus Id \l Orb_{T^{j}\oplus T^{2j}}(x,x)\r$. Therefore, there is $n$ such that $T^{jn+jk}(x)\in U$,  $T^{2jn}(x)\in V$.

 Then $jn-jk=(jn+jk)-2jk\in N_T(x,U)- N_T(x,U)=N_T(U,U)$
 and
 $jn-jk=2jn-(jn+jk)\in N_T(x,V)- N_T(x,U) =N_T(U,V)$. 
 
 This implies that $N_T(U,U)\cap N_T(U,V)\ne\emptyset$ and thus $T$
 is weakly mixing.
 \end{proof}

% \textcolor{red}{PREGUNTA: podemos construir $T\oplus T^2$ hiper con $T$ no w-mixing? }

% \textcolor{red}{
% %https://www.uv.es/wfav2013/wfav2013_archivos/index_archivos/charlas/Bes_WFAV2013.pdf
% aca (link comentado) preguntan si $T\oplus T^2$ hiper implica $T,T^2$ d-hiper}
% \textcolor{blue}{Rta parcial: el operador de puig $T$ mixing entonces $T \oplus T^2$ hiper, pero $T\oplus T^2$ no es $d$-transitive}

To prove Theorem \ref{Sidon non weakly} we will use Bayart and Matheron \cite{BayMat09Non} construction of a non weakly mixing and hypercyclic operator. Let us recall briefly their  construction.

Bayart and Matheron's operator is an upper triangular perturbation of a weighted forward shift in $\ell_1$. For a sparse sequence $(b_n)$, a null sequence $(a_n)$, weights $w_n$  and a dense sequence of polynomials $(P_n)$ to be specified, they consider 

$$\begin{cases}
&T(e_i)=
w_ie_{i+1} \text { if } b_{k-1}\leq i <b_{k}-1;\\
&T^{b_k}(e_1)=P_k(T)(e_1)+ \f{e_{b_k}}{a_k}.
\end{cases}
$$

Since $T$ is an upper triangular operator, $e_1$ is a cyclic vector for $T$ and hence $\{P(T)(e_1): P \text { is a polynomial }\}$ is dense in $\ell_1$. Therefore if $\f{1}{a_n}\to 0$ and $(P_n)$ is a dense family of polynomials, it follows that $e_1$ is a hypercyclic vector for $T$. We notice also that $span(Orb_T(e_1))=c_{00}$. The following definition is useful.
\begin{definition}
Let $(P_n)$ be a sequence of polynomials and $(u_n)$ an increasing sequence of positive real numbers. We will say that $(P_n)$ is controlled by $(u_n)$ provided that for every $n$ $deg(P_n)$ and $|P_n|_1$ are both less than $u_n$. 
\end{definition}

\begin{definition}
Let $(\Delta_l)$ be a sequence of natural numbers. An
increasing sequence of natural numbers ($b_n$) is said to be a ($\Delta_l$)-Sidon
sequence if the sets of natural numbers $$J_l :=[b_l,b_l+\Delta_l]\bigcup
\bigcup_{k\leq l}
[b_l + b_k , b_l + b_k + \Delta_l]$$
are pairwise disjoint.
\end{definition}

The following Theorem is deduced from the proof of \cite[Theorem 1.6]{BayMat09Non}.

\begin{theorem}\label{Teo Bay}
Let $\Delta_l\to \infty$ and $(b_n)$ be a $(\Delta_l)$-Sidon sequence. Then there are paramaters $w_n$, $a_n\to \infty$ and $u_n\to \infty$ such that  whenever $(P_n)$ is controlled by $(u_n)$ then the operator $T$ is continuous and not weakly mixing. 
\end{theorem}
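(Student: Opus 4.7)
The plan is to follow the construction in the proof of Theorem~1.6 of Bayart--Matheron \cite{BayMat09Non}, extracting explicit dependence of the parameters on the sequence $(b_n)$ only and verifying that the choice can be made \emph{before} the polynomials $(P_n)$ are specified. The two properties to check are continuity of $T$ on $\ell_1$ and, via Proposition \ref{carac weakly mixing}, the existence of open sets $U,V$ with $N_T(U,U)\cap N_T(U,V)=\emptyset$.

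For continuity, since $T$ acts as a weighted shift on all basis vectors except $e_{b_k-1}$, we have $\|T e_i\|_1 = |w_i|$ away from these exceptional indices, and it suffices to take $w_i=1$ there. The value of $T(e_{b_k-1})$ is forced by the defining relation $T^{b_k}(e_1)=P_k(T)(e_1)+e_{b_k}/a_k$ together with $T^{b_k-1}(e_1) = w_1\cdots w_{b_k-1}\, e_{b_k-1}$, giving
\[
T(e_{b_k-1})=\frac{1}{w_1\cdots w_{b_k-1}}\l P_k(T)(e_1)+\frac{e_{b_k}}{a_k}\r.
\]
Since $\deg(P_k)$ and $|P_k|_1$ are at most $u_k$, and $P_k(T)(e_1)$ is supported in the blocks $[b_{j-1},b_j)$ arising from the earlier perturbations, its $\ell_1$-norm is bounded by an explicit expression in $u_k$, $b_1,\dots,b_{k-1}$ and $a_1,\dots,a_{k-1}$. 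I would then enlarge the single weight at position $b_k-1$ to absorb this bound and choose $a_n,u_n\to\infty$ slowly enough (inductively, one $b_k$ at a time) to keep $\|T\|$ finite.

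For non-weak mixing I would take $U$ a small neighborhood of $e_1$ and $V$ a small neighborhood of $2e_1$. Expanding $T^n(e_1)$ inductively through the defining recursion, its first coordinate is a sum of contributions generated at the perturbation times $b_l$ and propagated afterwards by the polynomials $P_k$. The key quantitative claim (as in \cite{BayMat09Non}) is that this first coordinate can be appreciably nonzero only when $n$ lies in one of the intervals $[b_l,b_l+\Delta_l]$ (where it is close to $1$, placing $T^n(e_1)$ near $U$) or $[b_l+b_k,b_l+b_k+\Delta_l]$ for some $k\leq l$ (where it is close to a constant determined by $P_k$, which we use to match the target defining $V$). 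The $(\Delta_l)$-Sidon property guarantees that the intervals $J_l$ are pairwise disjoint, so these two types of return times never coincide, yielding $N_T(U,U)\cap N_T(U,V)=\emptyset$.

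The main obstacle is the quantitative expansion of $T^n(e_1)$: we must bound the cumulative effect of \emph{all} perturbations simultaneously so that, outside $\bigcup_l J_l$, the first coordinate of $T^n(e_1)$ is too small to lie in either $U$ or $V$, regardless of which polynomials $(P_n)$ controlled by $(u_n)$ were later chosen. This requires $a_n$ to grow sufficiently fast compared to $u_n$ and to the geometry of the blocks, in order to prevent a long tail of small contributions from conspiring at a ``wrong'' time. Once this estimate is in place, the Sidon disjointness immediately gives the conclusion via Proposition \ref{carac weakly mixing}.
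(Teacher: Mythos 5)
The paper does not actually prove this statement: it is extracted verbatim, with the strengthened Sidon hypothesis, from the proof of Theorem~1.6 in \cite{BayMat09Non}, and the paper offers nothing beyond that citation. Your outline follows exactly that route, and it correctly identifies the two things that must be checked (continuity via control of the forced vector $T(e_{b_k-1})$, and non-weak mixing via Proposition~\ref{carac weakly mixing} together with the pairwise disjointness of the sets $J_l$), as well as the crucial quantifier order: the parameters $w_n,a_n,u_n$ must be fixed \emph{before} the polynomials $(P_n)$, which is what makes the theorem usable in the proof of Theorem~\ref{Sidon non weakly}. In that sense you are doing no less than the paper does.

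That said, as a proof the proposal has a genuine gap and two technical inaccuracies. The gap is the one you yourself flag in the last paragraph: the uniform estimate showing that, for $x$ near $e_1$ and $n\notin\bigcup_l J_l$, the relevant coordinate of $T^n(x)$ is too small to produce a return, \emph{uniformly over all} $(P_n)$ controlled by $(u_n)$. This estimate is the entire content of the Bayart--Matheron construction (it is why the De la Rosa--Read problem was open for so long), so deferring it means the argument is a road map rather than a proof. The inaccuracies: first, $T^{b_k-1}(e_1)$ is not a scalar multiple of $e_{b_k-1}$ for $k\ge 2$, since the earlier relations $T^{b_j}(e_1)=P_j(T)(e_1)+e_{b_j}/a_j$ contribute components on lower coordinates; one must isolate the top coefficient of $T^{b_k-1}(e_1)$ and solve for $T(e_{b_k-1})$ accordingly, which changes the norm bound you need to absorb into the weights. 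Second, the description of the separating mechanism is off: on $[b_l,b_l+\Delta_l]$ the first coordinate of $T^n(e_1)$ is governed by the coefficients of $P_l$, which range over a dense set (this is precisely how hypercyclicity is produced), so it is not ``close to $1$''; the actual separation of $N_T(U,U)$ from $N_T(U,V)$ in \cite{BayMat09Non} is more delicate and uses the asymmetric roles of the two families of intervals making up $J_l$. If you intend to reprove the theorem rather than cite it, these are the points where the real work lies.
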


% In the next proof we will use the following notation. Let $\Phi:\zN\to \{(a,b)\in \zN \times \zN: a\geq b\}$ be the canonical bijection. Given a sequence $(x_n)$, then $\tilde x_{n,k}$ will denote the same sequence given by $x_{\phi(n)}.$ 

\begin{proof}[Proof of Theorem \ref{Sidon non weakly}]
Let $(F_n)_n$ be a collection of  finite sets, with $|F_n|=n$, such that $F_n\cup \{0\}$ is a Sidon set and such that any finite set $F\subset\mathbb N$ such that $F\cup\{0\}$ is Sidon, is contained in  $F_n\cup \{0\}$ for some $n$.  We consider $(j_{n,k})_{0\leq k\le n,n\in\mathbb N}$ such that for every $n$, $(j_{n,k})_{0\le k\le n}$ forms an increasing enumeration of $F_n\cup\{0\}$. Thus, it suffices to show the existence of a Bayart-Matheron operator such that for each $n$, the set of operators $\{T^{j_{n,1}},\dots,T^{j_{n,n}}\}$ is  disjoint hypercyclic.

For a sequence $m_{l,n}$ such that $l\geq n$ to be defined we will consider $b_{l,n,k}$, $1\leq k\leq n\leq l$ such that  $b_{l,n,k}=m_{l,n} j_{n,k}$. Note that $b_{l,n,k}$ is not defined if $k=0$.

The order considered for the tuples $(l,n)$  is lexicographic, that is,
      $(l,n)\leq (l',n')$ if  $l<l'$ or if $l=l'$ and $n\le n'$. The tuples $(l,n,k)$ will also be ordered lexicographically.

We will construct $m_{l,n}$ by induction in $(l,n)$ so that the sets 
$$
J_{l,n,k}:=[b_{l,n,k},b_{l,n,k}+\frac{m_{l,n}}{2}]\bigcup\left(\bigcup_{(l',n',k')\leq (l,n,k)}\left[b_{l,n,k}+ b_{l',n',k'} ,b_{l,n,k}+ b_{l',n',k'}+\f{m_{l,n}}{2}\right]\right)
$$
with $1\le k\le n\le l$
are pairwise disjoint. If so, it will follow by definition that $(b_{l,n,k})$ is a $\Delta_{l,n,k}$-Sidon sequence for $\Delta_{l,n,k}=\f{m_{l,n}}{2}$.
At each inductive step $(L,N)$ we will construct sets $J_{L,N,k}$, $1\leq k\leq N$ so that
\begin{enumerate}
    \item  for $1\leq k\leq N$ the $J_{L,N,k}$ are pairwise disjoint and
    \item for every $1\leq k\leq N$, each $J_{L,N,k}$ is disjoint from $
\bigcup_{(l',n',k')<(L,N,k)} J_{l',n',k'}.$ 
\end{enumerate}

% the sets $J_{L,N,k},$ for every $k\in [1,n]$, which are pairwise disjoint and also disjoint from  
% $$
% \bigcup_{(l',n',k')<(l,n,k)} J_{l',n',k'}.
% $$
The first step is straightforward because there is a single set. We put $m_{1,1}=1$.

Suppose now that we have constructed $m_{1,1},\ldots m_{L,N}$ such all the sets $J_{l,n,k}$ with $(l,n,k)\leq (L,N,N)$ 
are pairwise disjoint. If $(\tilde L,\tilde N)$ denotes the immediate successor of $(L,N)$ we have to choose $m_{\tilde L, \tilde N}$ such that the $J_{l,n,k}$ are pairwise disjoint for every $(l,n,k)\leq (\tilde L,\tilde N ,\tilde N)$. To do that, we will choose $m_{\tilde L,\tilde N}$ big enough so that for $k\leq \tilde N$ the minimum of $J_{\tilde L,\tilde N,k}$ is greater than the maximum of $\bigcup_{(l,n,k)\leq (L,N,N)} J_{l,n,k}$. 
Then we will use that $(j_{n,k})_k$ is Sidon to show that, for $k\leq \tilde N$, the $J_{\tilde L,\tilde N,k}$ are pairwise disjoint.

Let $m_{\tilde L,\tilde N}$ such that for every $(l',n')\leq (L,N),$
\begin{equation}\label{def m_n}
  m_{\tilde L,\tilde N}>2m_{l',n'}j_{n',n'}+\frac{m_{l',n'}}{2}  .
  \end{equation}

We claim that for every $k\leq \tilde N$, the set $J_{\tilde L,\tilde N,k}$ is disjoint to $J_{l',n',k'}$ for every $(l',n',k')\leq (L,N,N)$. Indeed, 
\begin{align*}
    \min J_{\tilde L,\tilde N,k}&= b_{\tilde L,\tilde N,k}= m_{\tilde L,\tilde N}j_{\tilde N,k}\geq m_{\tilde L,\tilde N}j_{\tilde N,1}\\
    &> 2m_{l',n'}j_{n',k'}+\frac{m_{l',n'}}{2}=
\max J_{l',n',k'}.
\end{align*} 

So, it only remains to prove that $J_{\tilde L,\tilde N,k}$, with $1\leq k\leq \tilde N$, are pairwise disjoint.

%  If we consider $m_{L+1}$ such that $m_{L+1}j_1>2m_{L}j_L+\frac{m_L}{2}$, then for every $k\leq L+1$, $J_{L+1,k}$ is disjoint to all the $J_{l,k}$ with $l\leq L$. Indeed, $\min J_{L+1,k}\geq m_{L+1}j_k\geq m_{L+1}j_1\geq \max_{l\leq L, k\leq l} J_{l,k}$. So, we have only to prove that the $J_{L+1,k}$ are pairwise disjoint. 

Suppose otherwise and let $t\in J_{\tilde L,\tilde N,k_1}\cap J_{\tilde L,\tilde N,k_2}$. Hence, there are $ (L_i',N_i',k_i')\leq (\tilde L,\tilde N,k_i)$ such that for $i=1$ and $i=2$ we have that
$$m_{\tilde L,\tilde N}j_{\tilde N,k_i}+m_{L_i',N_i'}j_{N_i',k_i'}\leq t\leq m_{\tilde L,\tilde N}j_{\tilde N,k_i}+m_{L_i',N_i'}j_{N'_i,k_i'} +\f{m_{\tilde L,\tilde N}}{2}.$$
Note that $k_i'$ may be equal to 0 here. This is the case if $t\in [b_{\tilde L,\tilde N,k_i},b_{\tilde L,\tilde N,k_i}+\frac{m_{\tilde L,\tilde N}}{2}]$.

Therefore
$\displaystyle j_{\tilde N, k_i}+\frac{m_{L_i',N_i'}}{m_{\tilde L,\tilde N}}j_{\tilde N,k_i'}\leq \f{t}{m_{\tilde L,\tilde N}}\leq j_{\tilde N,k_i}+\frac{m_{L_i',N_i'}}{m_{\tilde L,\tilde N}}j_{\tilde N,k_i'} +\f{1}{2}$, $i=1,2$.
Applying \eqref{def m_n} we obtain that if $(L_i',N_i')<(\tilde L,\tilde N)$,   
$$
j_{\tilde N, k_i}\leq \f{t}{m_{\tilde L,\tilde N}}< j_{\tilde N,k_i} +1.
$$
Otherwise, if $(L_i',N_i')=(\tilde L,\tilde N)$, we obtain $$
j_{\tilde N, k_i}+j_{\tilde N,k_i'}\leq \f{t}{m_{\tilde L,\tilde N}}\leq j_{\tilde N,k_i}+j_{\tilde N,k_i'} +\f{1}{2}.
$$
Thus, for example, if $(L_1',N_1')<(\tilde L,\tilde N)$ and $(L_2',N_2')=(\tilde L,\tilde N)$, the above inequalities show that $j_{\tilde N, k_1}=\lfloor \f{t}{m_{\tilde L,\tilde N}}\rfloor=j_{\tilde N, k_2}+j_{\tilde N, k_2'}$.
This is a contradiction because  $\{0,j_{\tilde N,1},\dots,j_{\tilde N,\tilde N}\}$ is Sidon, $k_1\neq k_2$ and $k_2'\leq k_2$.

The other cases 
\begin{itemize}
    \item $(L_1',N_1')<(\tilde L,\tilde N)$ and $(L_2',N_2')<(\tilde L,\tilde N)$,
    \item $(L_1',N_1')=(\tilde L,\tilde N)$ and $(L_2',N_2')<(\tilde L,\tilde N)$,
    \item $(L_1',N_1')=(L_2',N_2')=(\tilde L,\tilde N)$
\end{itemize}
 are similar.

We have proved that the sets $J_{l,n,k}$, $1\le k\le n\le l$ are pairwise disjoint and thus $b_{l,n,k}$ is a $\Delta_{l,n,k}$-Sidon sequence for some $\Delta_{l,n,k}\to \infty$. Therefore, by Theorem \ref{Teo Bay}, there are parameters $w_{l,n,k},\f{1}{a_{l,n,k}}\to 0$ and $u_{l,n,k}\to\infty$ such that whenever $P_{l,n,k}$ is controlled by $u_{l,n,k}$ then $T$ is continuous and not weakly mixing.  

We consider a family of polynomials $P_{l,n,k}=P_{l,k}$ controlled by $u_{l,n,k}$ such that for every $n,$ $\l P_{l,1}\oplus P_{l,2},\ldots \oplus P_{l,n}\r_{l\geq n}$ is dense in $\bigoplus_{k\leq n} \C([x])$. 
To construct this sequence of polynomials just consider $v_{l,k}=\min_{n\in[k,l]} \{u_{l,n,k}\}$ and a dense sequence $(Q_l)_l\subset \bigoplus_{k\in\zN} \C[x]$, where $Q_l=([Q_l]_1,[Q_l]_2,\dots)$,  with the additional property that each $[Q_l]_k$ is controlled by $ v_{l,k}$ whenever $l\geq k$.

The polynomials $P_{l,k}=[Q_l]_k$ satisfy the desired property.

It remains to show that for every $n$ and $k$, $e_1\oplus e_1\ldots \oplus e_1$ is a hypercyclic vector for $T^{j_{n,1}}\oplus T^{j_{n,2}}\ldots \oplus T^{j_{n,n}}$. Indeed, 
\begin{align*}
&(T^{j_{n,1}}\oplus T^{j_{n,2}}\ldots \oplus T^{j_{n,n}})^{m_{l,n}}(e_1\oplus\ldots \oplus e_1)=  (T^{b_{l,n,1}}\oplus T^{b_{l,n,2}}\ldots \oplus T^{b_{l,n,n}})(e_1\oplus\ldots \oplus e_1)\\
&=P_{l,1}(T)(e_1)+ \f{e_{ b_{l,n,1}}}{ a_{l,n,1}}\oplus \ldots \oplus P_{l,n}(T)(e_1)+ \f{e_{ b_{l,n,n}}}{ a_{l,n,n}}.    
\end{align*}

Thus $((T^{j_{n,1}}\oplus T^{j_{n,2}}\ldots \oplus T^{j_{n,n}})^{m_{l,n}}(e_1\oplus\ldots\oplus e_1))_{l\ge n}$
 is dense in $\bigoplus_{1\leq k\leq n} \ell_1$. 
\end{proof}

\begin{remark}
Our definition of $\Delta_l$-Sidon set is slightly different to the originally proposed by Bayart and Matheron in \cite{BayMat09Non}. The reason is that their condition is not strong enough to prove Theorem \ref{Teo Bay}. Indeed, otherwise, we could, using the same techniques that we used to prove Theorem \ref{Sidon non weakly}, construct a non weakly mixing operator such that $T$ and $T^2$ are disjoint hypercyclic. These conditions are incompatible since $\{T,T^2\}$ disjoint hypercyclic implies that $T$ is weakly mixing. 
\end{remark}

\section{A non weakly mixing operator such that $T\oplus T^2\ldots \oplus T^n$ is hypercyclic for every $n$}\label{Seccion multihiper}

In this section,  we exhibit a non weakly mixing operator such that $T\oplus T^2\ldots \oplus T^n$ is hypercyclic for every $n$ (Theorem \ref{product hypercyclic pero no weakly mixing}). The operator is the one defined in Theorem \ref{Sidon non weakly}.

To show that the operator satisfies the desired property, we will study a nice relationship between the disjointness hypercyclicity of $\{T^j:j\in J\}$ and the hypercyclicity of $\bigoplus_{k\in K} T^k$ for some subsets $K\sub J- J$.

The next Proposition characterizes the hypercyclicity of $\bigoplus_{j=1}^n T_j$ for an $n$-tuple of hypercyclic operators.
\begin{proposition}
Let $T_1,\ldots T_N$ be hypercyclic operators such that for every $N+1$-tuple of nonempty open sets $U,V_1,\ldots V_N$ we have that $\bigcap_{i=1}^N N_{T_i}(U,V_i)\neq \emptyset$.  Then $\bigoplus_{i=1}^N T_i$ is hypercyclic.
\end{proposition}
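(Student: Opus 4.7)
The plan is to show that $\bigoplus_{i=1}^N T_i$ is topologically transitive on $X^N$, which yields hypercyclicity by Birkhoff's transitivity theorem. Concretely, for every choice of nonempty open sets $U_1,\ldots,U_N,V_1,\ldots,V_N$ in $X$, I need to produce a common $n$ with $T_i^n(U_i)\cap V_i\ne\emptyset$ for all $i$. The central difficulty is that the hypothesis produces such a common $n$ only when the source is a single common set $U$, whereas here the sources $U_i$ differ.

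To bridge this gap, I first construct a common nonempty open set $W\subseteq U_1$ together with ``pullback times'' $k_1=0,k_2,\ldots,k_N\ge 0$ satisfying $T_i^{k_i}(W)\subseteq U_i$ for every $i$. I build $W$ iteratively: set $W^{(1)}:=U_1$; for $i=2,\ldots,N$, use the transitivity of $T_i$ (implied by its hypercyclicity) to pick $k_i$ with $W^{(i-1)}\cap T_i^{-k_i}(U_i)\ne\emptyset$, and define $W^{(i)}:=W^{(i-1)}\cap T_i^{-k_i}(U_i)$. Then $W:=W^{(N)}$ is a nonempty open subset of $U_1$ satisfying $T_i^{k_i}(W)\subseteq U_i$ for each $i\ge 2$ (and trivially for $i=1$).

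Next, I apply the hypothesis to $U=W$ and to the \emph{shifted} target sets $V_i':=T_i^{-k_i}(V_i)$. Each $V_i'$ is open, and is nonempty because every power of a hypercyclic operator has dense range (a short induction from the fact that $T_i(X)$ is dense). The hypothesis yields some $n$ with $T_i^n(W)\cap V_i'\ne\emptyset$ for every $i$; equivalently, there exist $y_i\in W$ with $T_i^{n+k_i}(y_i)\in V_i$. Setting $z_i:=T_i^{k_i}(y_i)\in U_i$ gives $T_i^n(z_i)=T_i^{n+k_i}(y_i)\in V_i$, so $n\in\bigcap_{i=1}^N N_{T_i}(U_i,V_i)$, which is exactly what topological transitivity requires.

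The genuinely clever step is the ``shifted target'' application of the hypothesis: replacing each $V_i$ by $T_i^{-k_i}(V_i)$ converts the available common-source information into the desired different-sources conclusion. The iterative pullback construction of $W$ is routine once each $T_i$ is known to be transitive, so the main obstacle really lies in choosing the right targets when invoking the hypothesis.
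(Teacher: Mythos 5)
Your proof is correct and follows essentially the same route as the paper's: an iterative pullback to a common source $W\subseteq U_1$ with times $k_i$, followed by applying the hypothesis to the shifted targets $T_i^{-k_i}(V_i)$ (nonempty since hypercyclic operators have dense range). No substantive difference.
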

\begin{proof}
Let $U_1,V_1,U_2,V_2\ldots U_N,V_N$ be nonempty open sets. 

Put $W_1=U_1$ and $n_1=0$. By an inductive argument we construct nonempty open sets $W_N\sub W_{N-1}\ldots \sub W_1\sub U_1$ and numbers $n_1,\ldots n_N$ such that for every $i$, $n_i\in N_{T_{i}}(W_{i-1},U_i)$ and $W_i=W_{i-1}\cap T_i^{-n_i}(U_i)$.

Since each $T_i$ is hypercyclic we have that each $T^{-n_i} (V_i)$ is a nonempty open set. 
Let $m\in \bigcap_{i=1}^N N_{T_i} (W_N, T^{-n_i} (V_i))$. We will show that $m\in \bigcap_{i=1}^N N_{T_i}(U_i,V_i)$. For $i=1$ it is clear because $W_N\sub U_1$ and $T^{-n_1}(V_1)=V_1$. If $i>1$, there is $x_i\in W_N$ such that $T_i^m(x_i)\in T_i^{-n_i}(V_i)$. Hence $T_i^{m+n_i}(x_i)\in V_i$. Using that $W_N\sub W_i\sub T_i^{-n_i}(U_i)$ we see that $T_i^{n_i}(x_i)\in U_i$. Therefore $m\in N_{T_i}(U_i,V_i)$.
\end{proof}

In the same way that $\{T^{j}:j\in J\}$ being disjoint hypercyclic implies that $T\oplus T$ is hypercyclic for $J\cup\{0\}$ not Sidon, there are nice relationships between the disjoint hypercyclicity of $\{T^{j}:j\in J\}$ and the hypercyclicity of $\bigoplus_{k\in K} T^{k}$ for some subsets $K\sub J-J$.

\begin{proposition}\label{d-hiper implica sumas hiper}
Let $J\sub \zN$ be a finite set and  $\{T^j:j\in J\}$ be disjoint hypercyclic. Let $(j_2^l)_{1\leq l\leq n}\sub J$ and $(j_1^l)_{1\leq l\leq n}\sub J\cup\{0\}$ such that
\begin{enumerate}
    \item [i)]$j_2^l\neq j_1^{l'}$ for every $1\leq l,l'\leq n$;
    \item   [ii)] $j_2^l\neq j_2^{l'}$ for every $1\leq l< l'\leq n$ and
    \item [iii)]$j_1^l<j_2^l$ for every $1\le l\le n$.
\end{enumerate}
  Then $\bigoplus_{l=1}^n T^{j_2^l-j_1^l}$ is hypercyclic.

% Let $J\sub \zN$ be a finite set and  $\{T^j:j\in J\}$ be disjoint hypercyclic. Let $\{j_2^1,\ldots j_2^n\}\sub J$ and $j_1^l\in J\cup\{0\}$ with $j_1^l<j_2^l$ for every $1\le l\le n$. Then $\bigoplus_{l=1}^n T^{j_2^l-j_1^l}$ is hypercyclic. 

\end{proposition}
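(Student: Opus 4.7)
My plan is to show transitivity of $R:=\bigoplus_{l=1}^n T^{j_2^l-j_1^l}$, which is equivalent to hypercyclicity. So, given nonempty open sets $U_l,V_l$ for $l=1,\dots,n$, I would produce a common $m\in\zN$ and points $y_l\in U_l$ with $T^{(j_2^l-j_1^l)m}(y_l)\in V_l$. Let $x$ be a $d$-hypercyclic vector for $\{T^j:j\in J\}$, so that $(x)_{j\in J}$ is a hypercyclic vector of $\bigoplus_{j\in J}T^j$ and, by projection, of $\bigoplus_{j\in J'}T^j$ for every $J'\sub J$.

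The key idea, inspired by the tricks in the proof of Theorem \ref{Sidon 1}, is to seek $y_l$ in the form $y_l:=T^{j_1^l m+s_l}(x)$ for cleverly pre-chosen shifts $s_l\in\zN$. Using the commutativity of powers of $T$, the conditions $y_l\in U_l$ and $T^{(j_2^l-j_1^l)m}(y_l)\in V_l$ become $T^{j_1^l m}(x)\in T^{-s_l}(U_l)$ and $T^{j_2^l m}(x)\in T^{-s_l}(V_l)$, where the preimages are nonempty open because $T^{s_l}$ has dense range. For $l$ with $j_1^l=0$ the first condition reduces to $T^{s_l}(x)\in U_l$, which I would arrange by choosing $s_l$ via the hypercyclicity of $T$.

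The main obstacle is that, unlike the $j_2^l$'s, which are distinct and disjoint from the $j_1^{l'}$'s by (i)--(ii), the values $\{j_1^l: j_1^l>0\}$ may repeat; two indices $l\ne l'$ sharing the same $j_1^l$ would naively impose the potentially impossible constraint $T^{j_1^l m}(x)\in U_l\cap U_{l'}$. I would overcome this via the pre-shifts: for each ``duplication class'' of indices $l_1,\dots,l_r$ with common value $j_1^l=j>0$, I set $s_{l_1}=0$ and iteratively apply transitivity of $T$ to choose $s_{l_{i+1}}$ so that $\bigcap_{k\le i+1} T^{-s_{l_k}}(U_{l_k})$ remains a nonempty open set.

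After this bookkeeping, the conditions on $m$ reduce to $T^{jm}(x)\in O_j$ for each $j\in J_U\cup J_V$, where $J_U:=\{j_1^l:j_1^l>0\}$, $J_V:=\{j_2^l:1\le l\le n\}$, and $O_j$ is one of the nonempty open sets just constructed ($O_j=\bigcap_{l:j_1^l=j}T^{-s_l}(U_l)$ for $j\in J_U$, and $O_j=T^{-s_l}(V_l)$ for the unique $l$ with $j_2^l=j$ when $j\in J_V$). Since $J_U\cap J_V=\emptyset$ by (i), these are $|J_U|+|J_V|$ independent coordinates of $\bigoplus_{j\in J_U\cup J_V}T^j$, and the required $m$ is delivered by the density of the orbit of $(x)_{j\in J_U\cup J_V}$.
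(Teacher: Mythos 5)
Your argument is correct, but it is organized quite differently from the paper's. The paper does not prove transitivity of $\bigoplus_l T^{j_2^l-j_1^l}$ directly with arbitrary $U_1,\dots,U_n$; instead it invokes the (unlabelled) proposition immediately preceding this one, which reduces hypercyclicity of a direct sum of hypercyclic operators to the weaker condition $\bigcap_l N_{T^{j_2^l-j_1^l}}(U,V_l)\neq\emptyset$ for a \emph{single common} open set $U$. With a common $U$ the whole difficulty you wrestle with disappears: repeated values of $j_1^l$ all impose the one condition $T^{j_1^l m}(x)\in U$, and the case $j_1^l=0$ is absorbed by choosing, via Proposition \ref{disjoint conmutan}, a $d$-hypercyclic vector $x$ already lying in $U$; a single $m$ then works with no shifts at all. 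Your route inlines the work that the paper delegates to that reduction lemma: the iterative choice of shifts $s_l$ shrinking $\bigcap_k T^{-s_{l_k}}(U_{l_k})$ is essentially the same shrinking argument as in the lemma's proof, reorganized around the duplication classes of the $j_1^l$ and around the indices with $j_1^l=0$. What your version buys is independence from Proposition \ref{disjoint conmutan}: you never need a dense set of $d$-hypercyclic vectors, only one such vector together with transitivity of $T$ itself (which follows since $x$ is hypercyclic for each $T^j$, $j\in J$). What the paper's version buys is brevity and a reusable lemma. Both arguments correctly exploit conditions i)--iii) at the same spot, namely that $J_U\cup J_V$ consists of distinct elements of $J$ so that the diagonal orbit of $x$ under $\bigoplus_{j\in J_U\cup J_V}T^j$ can be steered into the product of the target sets.
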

\begin{proof}
Let $U$ be a nonempty open set and $V_l: 1\leq l\leq n$ be nonempty open sets. We will prove that $\bigcap_{l\leq n} N_{T^{j_2^l-j_1^l}}(U,V_l)
$ is nonempty.

By Proposition \ref{disjoint conmutan} there is
 $x\in U$ a disjoint hypercyclic vector. Let $m\in\zN$ such that for every $1\leq l\leq n$, $T^{j^l_2 m}(x)\in V_l$ and $T^{j_1^lm}(x)\in U$. Therefore, $j_2^lm-j_1^lm\in N_T(x,V_l)-N_T(x,U)=N_T(U,V_l)$. We conclude that $m\in N_{T^{j_2^l-j_1^l}}(U,V_l)$ for every $1\leq l\leq n$.
\end{proof}

As an application of the above theorems, we exhibit now a non weakly mixing operator $T$ such that $T\oplus T^2\ldots \oplus T^n$ is hypercyclic for every $n$.
\begin{theorem}\label{product hypercyclic pero no weakly mixing}
There exists a non weakly mixing operator $T$ such that $T\oplus T^2\oplus \ldots \oplus T^n$ is hypercyclic for every $n$.
\end{theorem}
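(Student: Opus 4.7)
My plan is to take $T$ to be the non weakly mixing operator constructed in Theorem \ref{Sidon non weakly}, and for each $n$ to exhibit a finite set $J\sub\zN$ with $J\cup\{0\}$ Sidon, together with sequences $(j_1^l),(j_2^l)$ satisfying the hypotheses of Proposition \ref{d-hiper implica sumas hiper}, chosen so that the conclusion is exactly that $T\oplus T^2\oplus\ldots\oplus T^n$ is hypercyclic. Since the operator constructed in Theorem \ref{Sidon non weakly} is disjoint hypercyclic with respect to $\{T^j:j\in J\}$ for every finite $J$ with $J\cup\{0\}$ Sidon, a single $T$ will work uniformly in $n$.

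Fix $n\ge 1$. I would pick an integer $N\ge 2n+1$ and set
$$J := \{\,N^l,\ N^l+l\ :\ 1\le l\le n\,\},\qquad j_1^l := N^l,\quad j_2^l := N^l+l.$$
Then $j_2^l-j_1^l=l$ for every $l$, and the three disjointness conditions of Proposition \ref{d-hiper implica sumas hiper} are easy to check: (ii) holds because $N^l+l$ is strictly increasing in $l$; (iii) is just $N^l<N^l+l$; (i) requires $N^l+l\ne N^{l'}$, and for $l'>l$ this follows from $N^{l'}-N^l\ge (N-1)N^l>n\ge l$. Granted that $J\cup\{0\}$ is Sidon, Theorem \ref{Sidon non weakly} produces a non weakly mixing $T$ for which $\{T^j:j\in J\}$ is disjoint hypercyclic, and Proposition \ref{d-hiper implica sumas hiper} then yields the hypercyclicity of $\bigoplus_{l=1}^n T^{j_2^l-j_1^l}=T\oplus T^2\oplus\ldots\oplus T^n$.

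The one step requiring care is the Sidon verification for $J\cup\{0\}$. Every nonzero element of $J\cup\{0\}$ has the form $N^l+\delta$ with $1\le l\le n$ and $\delta\in\{0,l\}$, so every sum of two such elements (or of $0$ with one of them) is either $0$, of the form $N^l+\delta$, or of the form $N^{l_1}+N^{l_2}+c$ with $c\in\{0,l_1,l_2,l_1+l_2\}\sub\{0,\ldots,2n\}$. Because $c<N$, no carry propagates past position $0$ when written in base $N$, so the sum uniquely determines the multiset of ``high positions'' $\{l_1,l_2\}$ (distinguishing $l_1=l_2$ by a digit $2$ from $l_1\ne l_2$ by two digits $1$, read from positions $\ge 1$) as well as the value of $c$ (read from position $0$). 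A routine case check across the three sum types then shows that distinct multisets of summands produce distinct sums, which is precisely the Sidon property.

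The Sidon verification is the only delicate point; the rest is bookkeeping around Proposition \ref{d-hiper implica sumas hiper} and Theorem \ref{Sidon non weakly}. The key design choice is to place the ``pairs'' $\{N^l,N^l+l\}$ on a geometric scale with ratio $N$ large enough that within-pair perturbations (of size at most $2n$) cannot interfere across different pairs.
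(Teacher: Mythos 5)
Your proof is correct and follows essentially the same route as the paper: both take the single operator of Theorem \ref{Sidon non weakly} and apply Proposition \ref{d-hiper implica sumas hiper} to a set consisting of $n$ pairs at distances $1,\dots,n$ whose union with $\{0\}$ is Sidon. The only difference is the explicit choice of that set --- you use pairs $\{N^l,N^l+l\}$ on a geometric scale with a base-$N$ digit argument, while the paper uses the doubling recursion $k_1=n+1$, $k_{j+1}=2(k_j+j)+1$ --- and both verifications go through.
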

 \begin{proof}
 Let $T$ be the operator constructed in Theorem \ref{Sidon non weakly}. Then $T$ is not weakly mixing and $\{T^{j}:j\in J\}$ is disjoint hypercyclic for every finite $J$ such that $J\cup \{0\}$ is Sidon.

Suppose that  $J_n=\{k_1,k_1+1,k_2,k_2+2,\dots,k_n,k_n+n\}$ is a Sidon set. Then Proposition \ref{d-hiper implica sumas hiper} implies that $T\oplus T^2\dots \oplus T^n$ is hypercyclic. Indeed, we may just take $j_2^l=k_l+l$ and $j_1^l=k_l$ for $l=1,\dots, n$.

If we take $k_1=n+1$ and $k_{j+1}=2(k_j+j)+1$ then it is simple to show that $J_n\cup\{0\}$ is Sidon. Indeed, suppose that $0\leq a_1\leq a_2\leq a_3<a_4$ are element in $J_n\cup\{0\}$ such that $a_1+a_4=a_2+a_3$. Notice that, by construction, if $l_1\leq l_2<l_3$ then $k_{l_1}+l_1+k_{l_2}+l_2<k_{l_3}$. This implies that there is $l\leq n$ such that $a_4=k_l+l$ and $a_3=k_l$. Hence, $l=a_2-a_1$. It follows that there must be $l'$ such that $a_2=k_{l'}+l'$ and $a_1=k_{l'}$, because otherwise $a_2-a_1>n+1>l$. Thus, $l=a_2-a_1=l'$, which is a  contradiction because $a_2<a_4.$ 

\end{proof}
\section{Syndetically and frequently transitive operators}\label{syndetically transitive}

In this section we study  syndetically and frequently transitive operators. The motivation comes from the facts that syndetically transitive operators satisfy that $T\oplus T^2\oplus\ldots  \oplus T^n$ is hypercyclic for every $n$ while frequently hypercyclic operators are syndetically transitive. 
In Theorem \ref{caract syndet}
we prove  that a linear operator $T$ is syndetically transitive  if and only if $T\oplus S$ is hypercyclic for every weakly mixing operator $S$. Analougsly, we prove that a linear operator is piecewise syndetically transitive if and only if $T\oplus S$ is hypercyclic for every syndetically transitive operator $S$. In Theorem \ref{frequently transitive non weakly mixing} we show an example of a frequently transitive operator that is not weakly mixing. This answers a question of \cite[Question 5.12]{BesMen19}.

Given a hereditary upward family $F \sub \mathcal P(\zN)$ (also called Furstenberg family) we say that an operator
is $\F$-hypercyclic if there is $x\in X$ for which the sets $N_T (x, U)$ of return times
belong to $\F$ and we say that an operator is $\F$-transitive provided that the sets $N_T (U,V)$ belong to $\F$. 

 We will consider the following families:

\begin{itemize}
    \item $A$ is said to have positive lower density  (or $A\in \underline {\mathcal D}$) if $ \underline{d}(A):\liminf_n \f{\#\{j\leq n:j\in A\}}{n}>0$,
    % \item $A$ is said to have positive upper density  (or $A\in \overline {\mathcal D}$) if $ {\overline d}(A):\limsup_n \f{\#\{j\leq n:j\in A\}}{n}>0$,
    % \item $A$ is said to have upper Banach density (or $A\in \overline {\mathcal {BD}}$) if $\overline{bd}(A):=\lim_n \limsup_k \f{\# A\cap [k,k+n]}{n}>0$,
    \item $A$ is said to be thick, provided that $A$ contains arbitrary long intervals,
    \item $A$ is said to be syndetic provided that $A$ has bounded gaps.
    \item $A$ is said to be piecewise syndetic provided that $A$ is the intersection of a thick set with a syndetic set. Equivalently, there is $b$ such that $A$ contains arbitrarily large sets with gaps bounded by $b$ and
    \item $A$ is said to be thickly syndetic provided that for every $k$ there is a syndetic set $S$ such that $S+\{0,\ldots k\}\sub A$.
\end{itemize}

The $\underline {\mathcal D}$-hypercyclic  (transitive) operators are known as frequently hypercyclic (transitive) operators.

Given a family $\F$, the dual family $\F^*$ is defined as 
$$\F^*=\{A\sub \zN: A\cap F\neq \emptyset \text{ for every } F\in\F\}.$$

 The dual of the thick sets (piecewise syndetic sets) are the syndetic sets (thickly syndetic sets) respectively.

It is not difficult to prove that there are no thickly or syndetically hypercyclic operators (See Propositions 2 and 3 of \cite{Bes16} for  proof of these facts). However, operators can be thickly transitive and syndetically transitive. It is well known that a linear operator is thickly transitive if and only if it is weakly mixing \cite[Theorem 4.6]{BayMat09} and that if $A$ has positive lower density, then $A- A$ is syndetic \cite[Proposition 3.19]{Fur81}. This implies that frequently hypercyclic operators are syndetically transitive. On the other hand, the proof of \cite[Theorem 6.31]{BayMat09} shows that syndetically transitive operators are weakly mixing. Thus, frequently hypercyclic operators are both syndetically and thickly transitive. 

For more on $\F$-hypercyclicity see \cite{Bes16,BesMen19,BonGro18,bonilla2020frequently,cardeccia2020arithmetic,cardeccia2021multiple, ErnEssMen21}.

It was proved independently in \cite[Proposition 4]{Moo10} and \cite{GroPer10} that whenever $T$ and $S$ are syndetically and thickly transitive, then $T\oplus S$ is syndetically and thickly transitive. In particular, the finite product of syndetically transitive operators is weakly mixing. 
\begin{theorem}\label{Mooth}
Let $f:X\to X$ and $g:Y\to Y$ be syndetically and thickly transitive continuous mappings. Then $f\oplus g$ is syndetically and thickly transitive.
\end{theorem}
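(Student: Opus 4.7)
The plan is to reduce the statement to a \textbf{Main Lemma}: if $h\colon Z\to Z$ is continuous, syndetically transitive, and thickly transitive, then for every pair of nonempty open sets $U,V\sub Z$ the return set $N_h(U,V)$ is \emph{thickly syndetic}, i.e.\ for each $k\geq 0$ there exists a syndetic $S\sub\zN$ with $S+\{0,1,\dots,k\}\sub N_h(U,V)$. Once this lemma (and its analogue for $g$) is available, the conclusion is purely combinatorial: basic open sets of $X\oplus Y$ are products and
\[
N_{f\oplus g}(U_1\times V_1,\,U_2\times V_2)=N_f(U_1,U_2)\cap N_g(V_1,V_2),
\]
so it suffices to verify two closure properties of the family of thickly syndetic sets, namely that (i) the intersection of a thickly syndetic set with a syndetic set is syndetic, and (ii) the intersection of a thickly syndetic set with a thick set is thick. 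The Main Lemma applied to $f$ gives a thickly syndetic first factor, while the hypothesis on $g$ gives a syndetic and thick second factor, so both transitivity properties of $f\oplus g$ follow.

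The combinatorial facts (i) and (ii) are straightforward interval arguments. For (i), let $A$ be thickly syndetic and $B$ syndetic with gap bound $L$. Applying the defining property of $A$ with $k=L$ yields a syndetic $S$ with $S+\{0,\dots,L\}\sub A$; every interval $[s,s+L]$, $s\in S$, necessarily meets $B$, producing an element of $A\cap B$ within distance $L$ of each $s\in S$, so $A\cap B$ is syndetic. For (ii), given a desired block length $m$, apply thick-syndeticity of $A$ with $k=m$ to obtain a syndetic $S'$ of some gap bound $L'$ satisfying $S'+\{0,\dots,m\}\sub A$; any interval of length $m+L'$ contained in $B$ (available by thickness) must contain some $s\in S'$, hence $[s,s+m]\sub A\cap B$ is a block of the required length.

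The main obstacle is the Main Lemma itself. Fix nonempty open $U,V$ and $k\geq 0$; one wishes to exhibit a syndetic $S$ with $s+i\in N_h(U,V)$ for every $s\in S$ and $0\leq i\leq k$, or equivalently $S\sub \bigcap_{i=0}^{k} N_h(U,h^{-i}(V))$ (each preimage is nonempty open because transitivity forces $h$ to have dense range). Each individual factor is syndetic by hypothesis, but finite intersections of syndetic sets can fail to be syndetic, so the thick-transitivity assumption must enter essentially. My plan is to invoke Furstenberg's theorem — the equivalence of thick transitivity of $h$ with transitivity of every finite product $h\oplus\cdots\oplus h$ — to locate a nonempty open $W\sub U$ and an integer $n_0$ with $h^{n_0+i}(W)\cap V\neq\emptyset$ for every $0\leq i\leq k$, and then, after shrinking $W$ so that the joint meeting condition is preserved along a syndetic set of iterates, to apply the syndetic transitivity of $h$ to $W$ in order to produce the required $S$. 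Assembling these ingredients is the technical heart of the argument, and is precisely where the hypotheses on $h$ are combined rather than used in isolation.
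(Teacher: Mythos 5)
Your reduction is sound, and it is essentially the route the paper itself takes: Theorem \ref{Mooth} is only cited there (to Moothathu and Grosse-Erdmann--Peris), but your ``Main Lemma'' is precisely the paper's Lemma \ref{thickly syndetic} (cited to \cite{BesMen19}), and your two closure properties (thickly syndetic $\cap$ syndetic is syndetic, thickly syndetic $\cap$ thick is thick) are exactly the duality the paper exploits in the proof of Theorem \ref{caract syndet}. Both interval arguments you give for (i) and (ii) are correct, and the identity $N_{f\oplus g}(U_1\times V_1,U_2\times V_2)=N_f(U_1,U_2)\cap N_g(V_1,V_2)$ finishes the deduction.

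The only place you stop short is the Main Lemma, which you leave as a plan (``assembling these ingredients is the technical heart''). The assembly is in fact short, and no ``shrinking along a syndetic set of iterates'' is needed; a single shrinking suffices. Fix nonempty open $U,V$ and $k\geq 0$. Transitivity gives $h$ dense range, so each $h^{-i}(V)$ is nonempty and open; by Furstenberg's theorem the $(k+1)$-fold product of $h$ with itself is transitive, so there is $m$ with $h^{m}(U)\cap h^{-i}(V)\neq\emptyset$ for all $0\leq i\leq k$ simultaneously. Set
$$W:=U\cap\bigcap_{i=0}^{k}h^{-(m+i)}(V),$$
a nonempty open subset of $U$ satisfying $h^{m+i}(W)\sub V$ for $0\leq i\leq k$. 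By syndetic transitivity, $S_0:=N_h(U,W)$ is syndetic; for $n\in S_0$ pick $x\in U$ with $h^{n}(x)\in W$, so $h^{n+m+i}(x)\in V$ for all $0\leq i\leq k$, i.e. $(S_0+m)+\{0,\dots,k\}\sub N_h(U,V)$. Since $S_0+m$ is syndetic, $N_h(U,V)$ is thickly syndetic. With this paragraph inserted, your proof is complete.
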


On the other hand, syndetically and transitive operators are thickly syndetically transitive. See \cite[Lemma 2.3]{BesMen19} for a proof of this result. 

\begin{lemma}\label{thickly syndetic}
Let $f:X\to X$ be a syndetically and thickly transitive mapping. Then, $f$ is thickly syndetically transitive.
\end{lemma}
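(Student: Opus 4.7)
The plan is to verify the characterization of thickly syndetic sets directly: fix a pair of nonempty open sets $U, V \sub X$ and an integer $k \geq 0$, and prove that the set
$$R_k := \bigcap_{j=0}^{k}\bigl(N_f(U,V) - j\bigr) = \{n \in \zN : \{n, n+1, \ldots, n+k\} \sub N_f(U,V)\}$$
is syndetic. Taking $S = R_k$ then gives $S + \{0,\ldots,k\} \sub N_f(U,V)$, so as $k$ and $U,V$ are arbitrary this yields that $N_f(U,V)$ is thickly syndetic.

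The key step is to rewrite $R_k$ as the return-time set of a single product map. Let $\tilde f := f \oplus f \oplus \cdots \oplus f$ act on $X^{k+1}$, set $\tilde U := U^{k+1}$, and let $\tilde V := V \times f^{-1}(V) \times \cdots \times f^{-k}(V)$. Unwinding definitions, $n \in N_{\tilde f}(\tilde U, \tilde V)$ iff there exist $x_0,\ldots,x_k \in U$ with $f^{n+j}(x_j) \in V$ for every $0 \le j \le k$, iff $f^{n+j}(U) \cap V \neq \emptyset$ for every such $j$, iff $n \in R_k$. Thus $R_k = N_{\tilde f}(\tilde U, \tilde V)$.

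For this identification to be useful, both $\tilde U$ and $\tilde V$ must be nonempty open subsets of $X^{k+1}$. Openness is immediate from continuity of $f$. For nonemptiness of $\tilde V$ it suffices to check that $f^{-j}(V) \neq \emptyset$ for each $0 \le j \le k$: since $f$ is in particular topologically transitive (being syndetically transitive), it admits a point $z$ with dense orbit, so one can find $n \ge j$ with $f^n(z) \in V$, whence $f^{n-j}(z) \in f^{-j}(V)$.

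To conclude, iterate Theorem \ref{Mooth}: the conclusion "syndetically and thickly transitive" feeds back into its hypothesis, so induction on the number of factors gives that $\tilde f = f^{\oplus (k+1)}$ is syndetically (and thickly) transitive. Applying this to the pair $(\tilde U, \tilde V)$ shows that $N_{\tilde f}(\tilde U, \tilde V) = R_k$ is syndetic, which is exactly what we needed. I do not anticipate a substantial obstacle; the essential content is the product identity $R_k = N_{\tilde f}(\tilde U, \tilde V)$ combined with closure of the class of syndetically-and-thickly transitive maps under finite products, which is precisely Theorem \ref{Mooth}.
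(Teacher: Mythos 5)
Your proof is correct. The paper gives no argument of its own here (the lemma is quoted from \cite[Lemma 2.3]{BesMen19}), and your route — identifying $\{n:\{n,\dots,n+k\}\subseteq N_f(U,V)\}$ with $N_{\tilde f}\bigl(U^{k+1},\,V\times f^{-1}(V)\times\cdots\times f^{-k}(V)\bigr)$ and then applying Theorem \ref{Mooth} inductively to conclude that the $(k+1)$-fold product is syndetically transitive — is exactly the standard argument underlying that reference, with all the needed checks (openness and nonemptiness of the factors, the set identity) carried out correctly.
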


The intersection of a syndetic set and a thick set is always nonempty. Therefore, if $T$ is syndetically transitive and $S$ is weakly mixing then $T\oplus S$ is hypercyclic. This property characterizes the syndetically transitive operators.  

\begin{theorem}\label{caract syndet}
Let $T$ be a linear operator. The following are equivalent:
\begin{enumerate}
    \item $T$ is syndetically transitive.
    \item $T\oplus S$ is weakly mixing for every weakly mixing operator $S$. 
    \item $T\oplus S$ is hypercyclic for every weakly mixing operator $S$.
    \end{enumerate}
\end{theorem}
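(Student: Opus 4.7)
I propose a cyclic proof $(1)\Rightarrow(2)\Rightarrow(3)\Rightarrow(1)$; the implication $(2)\Rightarrow(3)$ is immediate since every weakly mixing operator is hypercyclic.

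For $(1)\Rightarrow(2)$, the plan is as follows. Syndetically transitive operators are already weakly mixing (hence thickly transitive by \cite[Theorem 4.6]{BayMat09}), so Lemma \ref{thickly syndetic} lets us upgrade $T$ to being thickly syndetically transitive. Since $S$ is weakly mixing, it is thickly transitive as well. For nonempty open $U_1,U_2\subset X$ and $V_1,V_2\subset Y$ (where $S:Y\to Y$), the identity
$$N_{T\oplus S}(U_1\times V_1,U_2\times V_2)=N_T(U_1,U_2)\cap N_S(V_1,V_2)$$
reduces the problem to showing that the intersection of a thickly syndetic set and a thick set is thick. This is a short combinatorial exercise: given $k$, pick a syndetic $\Sigma_k$ with gap bound $m$ satisfying $\Sigma_k+\{0,\ldots,k\}\subset N_T(U_1,U_2)$; inside any interval of length $m+k$ contained in $N_S(V_1,V_2)$, a point $s\in\Sigma_k$ lying in the leftmost $m$-block produces an entire block $\{s,\ldots,s+k\}$ inside both sets. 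Hence $T\oplus S$ is thickly transitive, i.e., weakly mixing.

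For $(3)\Rightarrow(1)$, I would argue contrapositively. If $T$ is not syndetically transitive, there exist nonempty open $U,V\subset X$ such that $N_T(U,V)$ is not syndetic, so its complement is thick and contains arbitrarily long intervals $I_k=[n_k,n_k+k]$. The plan is to construct a weakly mixing operator $S$ on some Banach space $Y$ together with nonempty open $V_1,V_2\subset Y$ such that $N_S(V_1,V_2)\subseteq\bigcup_k I_k$; then $N_T(U,V)\cap N_S(V_1,V_2)=\emptyset$ and $T\oplus S$ fails to be hypercyclic, contradicting (3). The main obstacle is producing such an $S$ with a return-time set squeezed into a prescribed sparse thick target. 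I would adapt the Bayart--Matheron operator of Theorem \ref{Teo Bay}, calibrating the sequence of shift lengths to the target intervals $I_k$ so that the relevant hitting times between basis-vector neighborhoods are forced to lie in $\bigcup_k I_k$, while introducing enough redundancy (for instance via disjoint-block amplification, or by verifying a hypercyclicity criterion along a suitable subsequence) to guarantee that $S$ is actually weakly mixing rather than only hypercyclic.
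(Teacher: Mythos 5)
Your treatment of $(1)\Rightarrow(2)$ and $(2)\Rightarrow(3)$ is correct and essentially identical to the paper's: upgrade $T$ to thickly syndetically transitive via Lemma \ref{thickly syndetic}, note that $N_S(V_1,V_2)$ is thick because $S$ is weakly mixing, and check combinatorially that a thickly syndetic set meets every thick set in arbitrarily long blocks. Your reduction in $(3)\Rightarrow(1)$ is also the right one: if $N_T(U,V)$ is not syndetic, find a weakly mixing $S$ and open sets with $N_S(V_1,V_2)\subseteq\bigcup_k[n_k,n_k+k]\subseteq N_T(U,V)^c$.

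The gap is that you never actually produce this $S$; the construction is the entire content of that implication, and the route you sketch points at the wrong tool. The Bayart--Matheron operator of Theorem \ref{Teo Bay} is engineered precisely to \emph{fail} weak mixing, so ``adapting'' it and then ``introducing enough redundancy to guarantee that $S$ is actually weakly mixing'' is not a plan so much as a restatement of the difficulty. The paper does this with a far simpler object: a weighted backward shift $B_w$ on $\ell_2$ with $w_n=2$ for $n\in[n_k,n_k+k]$, $w_n=2^{-k}$ for $n=n_k+k+1$, and $w_n=1$ otherwise. Then $\prod_{j=1}^{n}w_j=2^{k}>1$ exactly when $n$ lies in some $[n_k,n_k+k]$ and equals $1$ otherwise; since the partial products are unbounded along a subsequence, $B_w$ is hypercyclic and hence (being a hypercyclic weighted shift) weakly mixing for free, and for $W=B(e_1,\varepsilon)$ with $\varepsilon<\tfrac12$ any $n\in N_S(W,W)$ forces $|x_n|<\varepsilon$ and $\bigl|\prod_{j=1}^n w_j\,x_n-1\bigr|<\varepsilon$, hence $\prod_{j=1}^n w_j>1$, hence $n\in\bigcup_k[n_k,n_k+k]$. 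You should replace your sketch with an explicit construction of this kind; as written, the step ``produce a weakly mixing $S$ whose return set is squeezed into a prescribed thick target'' is asserted rather than proved.
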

\begin{proof}

(1)$\Rightarrow$(2). 
Since $T$ is a syndetically transitive linear operator, it is thickly transitive. It follows by the above lemma that $T$ is thickly syndetically transitive. 

Let $U_1,U_2,V_1,V_2$ be nonempty open sets. Then, $N_S(U_2,V_2)$ is thick while $N_T(U_1,V_1)$ is thickly syndetic, i.e. for every $k$, there is a syndetic set $A$ such that $A+\{1,\ldots k\}\sub N_T(U_1,V_1)$. Let $k\in\zN$, then $N_S(U_2,V_2)\cap A+\{1,\ldots k\}$ contains $k$ consecutive integers.
 
(2)$\Rightarrow$ (3) is immediate. 

$(3)\Rightarrow(1)$. We will prove that
if $T$ is not syndetically transitive, then there is a weakly mixing operator $S$ such that $T\oplus S$ is not hypercyclic.

Let $T$ be not syndetically transitive. Hence there are nonempty open sets $U,V$ such that $N_T(U,V)$ is not syndetic. Thus, there is a sequence $n_k$ for which $n_{k+1}>n_k+k$ and 
$$\bigcup_k [n_k,n_k+k]\sub N_T(U,V)^c.$$

It suffices to show the existence of a weakly mixing operator $S$ and a nonempty open set $W$ for which $$N_S(W,W)\sub \bigcup_k [n_k,n_k+k].$$

Let $S$ be the weighted backward shift on $\ell_2$ given by the weights
$$w_n=\begin{cases}
 2 &\text{ if } n\in [n_k,n_k+k] \text{ for some } k;\\
2^{-k} &\text{ if } n=n_k+k+1;\\
1 &\text{ else }.
\end{cases}
$$

The weights $(w_n)_n$ are bounded and $\prod_{j=1}^{n_k+k} w_j=2^k$. These facts imply that $B_w$ is well defined and that $B_w$ is weakly mixing (see \cite[Chapter 4]{GroPer11}). On the other hand we notice that, if $n\notin \bigcup_k [n_k,n_k+k]$, then $\prod_{j=1}^n w_j=1$.

Consider now $W=B(e_1,\varepsilon)$ with $\varepsilon<\f{1}{2}$ and let $n\in N_S(W,W).$ Hence, there is $x$ such that $\|x-e_1\|<\varepsilon$ and $\|S_w^n(x)-e_1\|<\varepsilon.$
Therefore,
$$|x_n|<\varepsilon \ \ \text{  and  }\ \ \left|\prod_{j=1}^n w_j x_n-1\right|=\left|[B_w^n(x)-e_1]_1\right|<\varepsilon.$$ 
It follows that $n\in \bigcup_k [n_k,n_k+k].$ 
\end{proof}

The symmetric problem recovers the piecewise syndetically transitive operators. 
\begin{theorem}\label{caract piece}
Let $T$ be a linear operator. The following are equivalent.
\begin{enumerate}
    \item $T$ is piecewise syndetically transitive.
    \item $T\oplus S$ is hypercyclic for every syndetically transitive operator.
\end{enumerate}
\end{theorem}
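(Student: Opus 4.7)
The argument parallels Theorem~\ref{caract syndet}, with the core tool being the duality between the piecewise syndetic family and the thickly syndetic family: each is the dual of the other, so every piecewise syndetic set meets every thickly syndetic set.

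For $(1) \Rightarrow (2)$, assume $T$ is piecewise syndetically transitive and let $S$ be any syndetically transitive operator. By Lemma~\ref{thickly syndetic}, $S$ is in fact thickly syndetically transitive, so $N_S(W_1, W_2)$ is thickly syndetic for every pair of nonempty open sets $W_1, W_2$ in the target space of $S$. On the other hand, $N_T(U,V)$ is piecewise syndetic for every pair of nonempty open $U, V \subseteq X$. By the duality recalled above, $N_T(U,V) \cap N_S(W_1, W_2) \neq \emptyset$, whence $T \oplus S$ is transitive and therefore hypercyclic.

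For $(2) \Rightarrow (1)$, I would argue by contrapositive. Suppose $T$ is not piecewise syndetically transitive, so there exist nonempty open $U, V \subseteq X$ with $A := N_T(U,V)$ not piecewise syndetic. By duality $B := \zN \setminus A$ is thickly syndetic, i.e.\ for every $m \in \zN$ there is a syndetic set $A_m$ with $A_m + \{0, 1, \ldots, m\} \subseteq B$. The plan is to construct a syndetically transitive operator $S$ on a separable Banach space together with a nonempty open set $W$ such that $N_S(W, W) \subseteq B$; then $N_{T \oplus S}(U \times W, V \times W) = A \cap N_S(W, W) = \emptyset$, so $T \oplus S$ is not hypercyclic.

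I would construct $S$ as a weighted backward shift $B_w$ on $\ell_2$, feeding the construction from the thickly syndetic structure of $B$. Pick $m_k \uparrow \infty$ and syndetic sets $A_{m_k}$ with $A_{m_k} + [0, m_k] \subseteq B$, and design the weights so that $\prod_{j=1}^n w_j$ stays close to $1$ outside a controlled thickly syndetic subset $B' \subseteq B$ and grows by factors of $2$ within the blocks $[a, a + m_k]$ indexed by $A_{m_k}$, with compensating small weights at each block end resetting the product to $1$. With $W = B(e_1, \varepsilon)$ for small $\varepsilon$, membership $n \in N_{B_w}(W,W)$ forces $\prod_{j=2}^{n+1} w_j \geq (1-\varepsilon)/\varepsilon$, which confines $n$ to $B' \subseteq B$, exactly as in the proof of Theorem~\ref{caract syndet}. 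The main obstacle, absent in the weakly mixing case, is that $S$ must be syndetically transitive rather than merely weakly mixing: placing the growth blocks along a syndetic backbone $A_{m_k}$ (rather than along a thick but sparse sequence as in Theorem~\ref{caract syndet}) makes the set $\{n : \prod_{j\le n} w_j \geq K\}$ syndetic for every $K$, and by tuning the $m_k$ and the compensating decay so that $\sum_n (w_1 \cdots w_n)^{-2} < \infty$, the shift satisfies the frequent hypercyclicity criterion and hence is syndetically transitive. Verifying that these two requirements on the weights can be simultaneously met is the technical heart of the argument.
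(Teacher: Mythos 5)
Your direction $(1)\Rightarrow(2)$ and the overall architecture of $(2)\Rightarrow(1)$ — pass to the complement $B=N_T(U,V)^c$, which is thickly syndetic by duality, and build a syndetically transitive weighted backward shift $S$ on $\ell_2$ with $N_S(W,W)\subseteq B$ for $W=B(e_1,\varepsilon)$ — coincide with the paper's proof. The gap is in how you certify that $S$ is syndetically transitive. You propose to tune the weights so that $\sum_n (w_1\cdots w_n)^{-2}<\infty$ and invoke the frequent hypercyclicity criterion. But this summability condition is incompatible with the other constraint you (correctly) impose. To force $N_S(W,W)\subseteq B$ you need the partial products $\prod_{j\le n}w_j$ to stay bounded (by roughly $(1-\varepsilon)/\varepsilon$) for every $n\in A=N_T(U,V)$; in the only nontrivial case $T$ is hypercyclic, so $A$ is infinite, and then $\sum_{n\in A}(w_1\cdots w_n)^{-2}$ already diverges. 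So the "two requirements on the weights" that you identify as the technical heart can never be simultaneously met, and the frequent-hypercyclicity route is a dead end.

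The repair is that the summability condition is not needed: the syndeticity of $\{n:\prod_{j=1}^n w_j>M\}$ for every $M$ — which the first half of your own weight design already delivers, since $B$ thickly syndetic means $\{n:[n,n+k]\subseteq B\}$ is syndetic for every $k$ — is by itself sufficient for a weighted backward shift to be syndetically transitive (\cite[Corollary 3.4]{BesMen19}). This is exactly the criterion the paper uses. With that substitution your construction can also be simplified as in the paper: set $w_n=2$ for $n\in B$, reset the product to $1$ by a weight $2^{-l}$ at the first index after each maximal block of $B$ of length $l$, and $w_n=1$ otherwise; there is no need for the intermediate syndetic backbones $A_{m_k}$ or a sparser subset $B'$.
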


\begin{proof}
(1)$\Longrightarrow (2)$.
Suppose that $T$ is piecewise syndetically transitive ans let $S$ be a syndetically transitive operator. Then $S$ is, by Lemma \ref{thickly syndetic}, thickly syndetically transitive. By the duality between the piecewise syndetic and the thickly syndetic sets, we obtain that for every tuple of nonempty open sets $U_1,U_2,V_1,V_2$, $$N_T(U_1,V_1)\cap N_S(U_2,V_2)\neq \emptyset$$
and hence $T\oplus S$ is hypercyclic. 

(2)$\Longrightarrow$ (1).
 Suppose that $T$ is not piecewise syndetically transitive. It suffices to show the existence of a syndetically transitive operator $S$ and nonempty open sets $U,V,W$ such that $N_S(W,W)\sub N_T(U,V)^c$. 

Since $T$ is not piecewise syndetically transitive there are nonempty open sets $U,V$ such that $N_T(U,V)$ is not piecewise syndetic. By duality, there is a thickly syndetic set $A$ such that $N_T(U,V)\cap A=\emptyset$. It follows that $N_T(U,V)^c$ is thickly syndetic. Equivalently, we have that for every $k$, $\{n:[n,n+k]\sub N_T(U,V)^c\}$ is syndetic. 

% It follows that there are pairwise disjoint intervals $F_j$ such that 
% \begin{enumerate}
%     \item $\bigcup_j F_j\sub N_T(U,V)^c$;
%     \item for every $k$, $\{n:[n,n+k]\sub F_j \text{ for some } j\}$ is syndetic and  
%     \item $\sup_j |F_j|=\infty$.
% \end{enumerate}  

Let $S$ be the weighted backward shift on $\ell_2$ given by the weights
$$w_n=\begin{cases}
 2 &\text{ if } n\in N_T(U,V)^c;\\
2^{-l} &\text{ if } n\notin N_T(U,V)^c,  n-1\in N_T(U,V)^c \text{ and } l=\max\{j\in\zN
:[n-j,n-1] \sub N_T(U,V)^c\};\\
1 &\text{ else }.
\end{cases}
$$

The weights $w_n$ are bounded and hence $S$ is a well defined backward shift. Also, for every $k$, $\{n:[n,n+k+1]\sub N_T(U,V)^c\}\sub\{n: \prod_{j=1}^n w_j>2^k\}$. This implies that for every $M$, $\{n: \prod_{j=1}^n w_j>M\}$ is syndetic and hence $S$ is sindetically transitive \cite[Corollary 3.4]{BesMen19}. On the other hand, we notice that if $n\notin N(U,V)^c$, then $\prod_{j=1}^n w_j=1$.

Consider now $W=B(e_1,\varepsilon)$ with $\varepsilon<\frac{1}{2}$ and let $n\in N_S(W,W)$. Thus, there is $x$ such that $\|x-e_1\|<\varepsilon$ and $\|S^n(x)-e_1\|<\varepsilon$. 
Therefore,
$$|x_n|<\varepsilon \ \ \text{  and  }\ \ \left|\prod_{j=1}^n w_j x_n-1\right|=\left|[B_w^n(x)-e_1]_1\right|<\varepsilon.$$ 
It follows that $n\in N_T(U,V)^c.$
\end{proof}

 Every syndetically transitive linear operator is weakly mixing. It is natural to ask the same for the family of sets having positive lower density. The following Theorem answers a question posed in \cite[Question 5.12]{BesMen19}.
\begin{theorem}\label{frequently transitive non weakly mixing}
There exist a non weakly mixing operator such that $T$ is frequently transitive. 
\end{theorem}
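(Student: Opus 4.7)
The plan is to construct an operator $T:\ell_1\to\ell_1$ of Bayart--Matheron type, along the lines of Theorem \ref{Sidon non weakly}, with parameters carefully chosen so that for every pair of nonempty open sets $U,V\sub\ell_1$ the hitting set $N_T(U,V)$ has positive lower density, while retaining non weak mixing.

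Concretely, I would take an operator defined by $T(e_i)=w_ie_{i+1}$ for $b_{k-1}\le i<b_k-1$ and $T^{b_k}(e_1)=P_k(T)(e_1)+\f{e_{b_k}}{a_k}$, using a $(\Delta_l)$-Sidon sequence $(b_k)$ with $\Delta_l\to\infty$ so that Theorem \ref{Teo Bay} supplies weights $(w_n)$, a divergent sequence $(a_k)$, and a control threshold $u_n\to\infty$ ensuring non weak mixing for every choice of polynomials $(P_k)$ controlled by $(u_n)$. The key design choice is to arrange $(P_k)$ with multiplicities: each polynomial in a fixed countable dense family should appear as $P_k$ along a set of indices of positive lower density in $\zN$. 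Given $U,V$ open, and polynomials $Q,R$ with $Q(T)(e_1)\in U$ and $R(T)(e_1)\in V$, the identity
\begin{equation*}
T^{b_k}(Q(T)(e_1))=Q(T)P_k(T)(e_1)+\f{Q(T)e_{b_k}}{a_k},
\end{equation*}
combined with $a_k\to\infty$, implies $b_k\in N_T(U,V)$ for every sufficiently large $k$ for which $Q(T)P_k(T)(e_1)$ is close enough to $R(T)(e_1)$. Thus $N_T(U,V)$ contains $\{b_k:k\in K\}$ for some $K\sub\zN$ of positive lower density.

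The chief obstacle is that $(\Delta_l)$-Sidon sequences with $\Delta_l\to\infty$ are necessarily sparse of density zero, so $\{b_k:k\in K\}$ has density zero in $\zN$ even when $K$ does not. To bridge this gap I would replace the single-scale construction by the multi-scale architecture $b_{l,n,k}=m_{l,n}j_{n,k}$ of Theorem \ref{Sidon non weakly}: the return times contributing to $N_T(U,V)$ arise not only from the sparse set $\{b_{l,n,k}\}$ but also, via the shift argument behind Proposition \ref{d-hiper implica sumas hiper} applied to the disjoint hypercyclicity of $\{T^{j_{n,k}}\}$, from the differences $(j_{n,k_2}-j_{n,k_1})m_{l,n}$. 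The main combinatorial task is then to choose the Sidon sets $F_n\cup\{0\}$ and the scales $m_{l,n}$ so that these differences cover a subset of $\zN$ of positive lower density, while simultaneously preserving the $(\Delta_{l,n,k})$-Sidon condition needed for Theorem \ref{Teo Bay}. Balancing this sparsity requirement (for non weak mixing) against a coverage requirement (for frequent transitivity) is the technical heart of the argument, and I expect it to be where the delicate inductive construction of the $m_{l,n}$, analogous to \eqref{def m_n} but with an additional density constraint at each step, carries the burden of the proof.
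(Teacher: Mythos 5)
There is a genuine gap. You correctly identify the obstacle — any $(\Delta_l)$-Sidon sequence with $\Delta_l\to\infty$ is too sparse for the return times $N_T(x,U)$ themselves to have positive density — but your proposed resolution (engineering the structured differences $(j_{n,k_2}-j_{n,k_1})m_{l,n}$ from the multi-scale disjointness construction so that they cover a positive-density set) is left entirely unexecuted, and it is not the right lever: those differences are still multiples of the sparse scales $m_{l,n}$, and nothing in your plan controls how they distribute in $\zN$. The "technical heart" you defer is precisely the part that needs a new idea, and the Sidon-set machinery of Theorem \ref{Sidon non weakly} does not supply it.

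The two missing ingredients are quantitative. First, one needs the sharper form of Bayart--Matheron's construction (Theorem \ref{Bayart Matheron}): for any $(m_k)$ with $m_k/k\to\infty$ there is a non weakly mixing $T$ and a vector $x$ with $N_T(x,U)\in O(m_k)$ for \emph{every} nonempty open $U$; taking $m_k=k^2$ gives return-time sequences of at most quadratic growth. Your proposal never controls the growth rate of the $b_k$'s, and without such control the difference set can easily have density zero. Second, one needs the elementary counting lemma that for any increasing sequence $(n_k)$ with $n_k\leq Ck^2$, at least half of the gaps $n_{k+1}-n_k$ with $k\leq K$ are bounded by $8Ck$, whence the full (unstructured) difference set $\{n_k-n_j:k\geq j\}$ has lower density at least $\f{1}{16C}$. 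With these two facts the proof is short: $N_T(U,U)=N_T(x,U)-N_T(x,U)$ has positive lower density, and for general $U,V$ one picks $U'\sub U$ and $n$ with $T^n(U')\sub V$, so that $N_T(U',U')+n\sub N_T(U,V)$. Your instinct to pass to difference sets is correct, but the density comes from the growth rate of the whole return-time sequence, not from a combinatorial arrangement of the Sidon scales.
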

The proof relies again on the construction of Bayart and Matheron of a non weakly mixing but hypercyclic operator. The following Theorem was proved in \cite{BayMat09Non}.
\begin{theorem}[Bayart and Matheron]\label{Bayart Matheron}
Let $(m_k)$ be a sequence of natural numbers such that $\lim_k \f{m_k}{k}=\infty.$ Then there exist a non weakly mixing operator $T$ and a vector $x$ such that for every nonempty open set $U$, $N_T(x,U)\in O(m_k).$
\end{theorem}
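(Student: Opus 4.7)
My plan is to carry out the Bayart--Matheron perturbed weighted-shift construction on $\ell_1$ (the framework set up just before Theorem \ref{Teo Bay}), with the sparse perturbation indices $(b_k)$ tuned to the growth of $(m_k)$. The hypercyclic vector will be $x=e_1$, and the bound $N_T(x,U)\in O(m_k)$ will come from the general principle that the orbit $(T^n e_1)$ can enter any prescribed open set only in bounded windows around the times $b_k$.

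First I choose $(b_k)$ to be a $(\Delta_l)$-Sidon sequence with $\Delta_l\to\infty$. Theorem \ref{Teo Bay} then furnishes weights $(w_n)$, factors $a_n\to\infty$, and a control sequence $u_n\to\infty$ such that the operator defined by $T(e_i)=w_ie_{i+1}$ for $b_{k-1}\leq i<b_k-1$ and $T^{b_k}(e_1)=P_k(T)(e_1)+\frac{e_{b_k}}{a_k}$ is continuous and non weakly mixing, provided $(P_k)$ is a dense family of polynomials controlled by $(u_k)$. I additionally require the $(b_k)$ to satisfy $|\{k:b_k\leq n\}|\leq m_n$ for every sufficiently large $n$; since the Sidon condition forces $(b_k)$ to be sparse and the hypothesis $m_n/n\to\infty$ gives enormous room, this extra constraint is compatible with the Sidon choice. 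The hypercyclicity of $e_1$ is automatic from the construction, because $\{P(T)(e_1):P\text{ polynomial}\}$ is dense in $\ell_1$, $(P_k)$ exhausts polynomials, and $\|e_{b_k}/a_k\|\to 0$.

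To establish $N_T(e_1,U)\in O(m_k)$, fix a nonempty open $U$. I may shrink it to a ball $B(y,\varepsilon)$ with $y\in c_{00}$, $\mathrm{supp}(y)\subset\{1,\dots,M\}$, and $\varepsilon<\|y\|_1/2$. The key claim is that there exists $R=R(U)$ with the property that $T^n(e_1)\in U$ implies $|n-b_k|\leq R$ for some $k$. Indeed, for $n$ outside every such window one computes $T^n(e_1)$ either as the pure shift $(w_1\cdots w_n)e_{n+1}$ (when $n<b_1$) or as $T^{n-b_k}\bigl(P_k(T)(e_1)+e_{b_k}/a_k\bigr)$, and in both cases the support of $T^n(e_1)$ lies above the coordinate $M$ once $n$ is sufficiently far from $b_k$ (using $\deg P_k\leq u_k$). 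Since $y$ has a fixed chunk of $\ell_1$-mass on coordinates $\leq M$, we get $\|T^n(e_1)-y\|_1\geq \|y\|_1-\varepsilon>\varepsilon$, contradicting $T^n(e_1)\in B(y,\varepsilon)$. Granted the claim,
\[
|N_T(e_1,U)\cap[0,n]|\leq(2R+1)\,|\{k:b_k\leq n+R\}|=O(m_n),
\]
which is the desired conclusion.

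The main obstacle is the quantitative claim in the previous paragraph: showing that between perturbations the orbit genuinely escapes every fixed coordinate block $\{1,\dots,M\}$, and that the polynomial perturbations $P_k(T)(e_1)$ do not generate spurious returns far from $b_k$. This rests on the fine calibration of $(w_n)$, $(a_n)$, $(u_n)$ delivered by Theorem \ref{Teo Bay}: the weights must neither decay so fast that the orbit converges to $0$ nor grow so fast that continuity fails, while the degree and norm bounds $\deg P_k,|P_k|_1\leq u_k$ force the ``bulk'' of $T^n(e_1)$ to sit at indices $\gtrsim n-u_k$, pushing it out of $\{1,\dots,M\}$ whenever $n-b_k$ exceeds some $R=R(U)$. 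Adjusting these parameters so that sparsity, continuity, non weak mixing, and the support-escape property all coexist is the technical core of the argument.
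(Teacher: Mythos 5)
The paper does not actually prove this statement; it is quoted directly from \cite{BayMat09Non}. Evaluating your proposal on its own terms, there is a fundamental gap: you have inverted the meaning of the conclusion. In this paper (and in Bayart--Matheron) ``$N_T(x,U)\in O(m_k)$'' means that the \emph{increasing enumeration} $(n_k)$ of the return set satisfies $n_k\leq Cm_k$; this is pinned down by the lemma preceding Theorem \ref{frequently transitive non weakly mixing} (``Let $(n_k)$ be an increasing sequence of natural numbers in $O(n^2)$\dots Let $C>0$ such that $n_k\leq Ck^2$'') and by its application there, where $N_T(x,U)\in O(k^2)$ is used to conclude that $N_T(x,U)-N_T(x,U)$ has \emph{positive lower density}. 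So the theorem guarantees that the orbit returns to every $U$ \emph{often} --- the $k$-th return occurs by time $Cm_k$ --- while $T$ nevertheless fails to be weakly mixing. Your argument instead establishes an upper bound on the counting function, $|N_T(e_1,U)\cap[0,n]|=O(m_n)$, i.e.\ that returns are \emph{rare}. Besides pointing in the wrong direction, this bound is weaker than the trivial estimate $|N_T(e_1,U)\cap[0,n]|\leq n+1=o(m_n)$ (since $m_n/n\to\infty$), and correspondingly your extra requirement $|\{k:b_k\leq n\}|\leq m_n$ is vacuously satisfied and does no work.

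The actual substance of the theorem, absent from the proposal, is the opposite calibration: one must arrange that for every basic open set $U=B(y,\varepsilon)$ the indices $k$ with $T^{b_k}(e_1)=P_k(T)(e_1)+e_{b_k}/a_k\in U$ occur densely enough that the $k$-th such return time is at most $Cm_k$, while simultaneously preserving the $(\Delta_l)$-Sidon structure of $(b_k)$ that defeats weak mixing. This is precisely where the hypothesis $m_k/k\to\infty$ enters, and it is sharp: if one could achieve $n_k=O(k)$, then $N_T(x,U)$ would have positive lower density for every $U$, hence $N_T(U,U)=N_T(x,U)-N_T(x,U)$ would be syndetic, making $T$ syndetically transitive and therefore weakly mixing. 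Your proposal never confronts this tension between the growth of $(b_k)$, the Sidon-type disjointness conditions, and the required lower bound on the density of the indices realizing a given $U$, so it does not prove the stated result.
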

We will need the following lemma, which is number-theoretic. 
\begin{lemma}
Let $(n_k)$ be an increasing sequence of natural numbers in $O(n^2)$. Then the set $\{n_k-n_j:k\geq j\}$ has positive lower density.
\end{lemma}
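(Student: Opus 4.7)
The plan is to fix $C>0$ such that $n_k\le Ck^2$ for all $k$, and for each large $N$ set $K:=\lfloor\sqrt{N/C}\rfloor$, so that $A_N:=\{n_1,\ldots,n_K\}\subseteq[1,N]$ has cardinality $|A_N|=K\ge \sqrt{N/C}$. Since $\{n_k-n_j:k\ge j\}\cap[1,N]\supseteq (A_N-A_N)\cap[1,N]$, it suffices to show that $|(A_N-A_N)\cap[1,N]|\ge cN$ for some constant $c=c(C)>0$ independent of $N$; this is exactly a positive lower density statement.

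The main tool will be Cauchy-Schwarz applied to the multiplicity function $r(d):=|\{(i,j):1\le i<j\le K,\;n_j-n_i=d\}|$, whose support lies in $[1,N]$ and which satisfies $\sum_d r(d)=\binom{K}{2}\ge N/(4C)$. Then
\[
|(A_N-A_N)\cap[1,N]|\;\ge\;\frac{\bigl(\sum_d r(d)\bigr)^2}{\sum_d r(d)^2}\;=\;\frac{\binom{K}{2}^2}{E(A_N)},
\]
where $E(A_N):=\sum_d r(d)^2$ is the additive energy of $A_N$, equivalently the number of quadruples $(i_1,j_1,i_2,j_2)\in[K]^4$ with $n_{j_1}-n_{i_1}=n_{j_2}-n_{i_2}$. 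The argument then reduces to proving an energy bound of the form $E(A_N)=O_C(N)$, since combined with $\binom{K}{2}^2\asymp N^2/C^2$ it yields $|(A_N-A_N)\cap[1,N]|\gtrsim N$.

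To establish this energy bound I plan a dichotomy. If $E(A_N)\le C'N$ for a suitable constant $C'=C'(C)$, Cauchy-Schwarz immediately closes the argument. Otherwise, by Balog-Szemerédi-Gowers $A_N$ contains a subset $A'$ of size $\gtrsim |A_N|$ with small doubling $|A'+A'|\le C''|A'|$, and Freiman's theorem realises $A'$ as (a dense subset of) an arithmetic progression. The crucial leverage is that $A_N$ is the initial segment of the \emph{fixed} sequence $(n_k)$ with $n_k\le Ck^2$: such a near-AP structure persisting as $N\to\infty$ forces the approximating progression to have common difference bounded by a constant depending only on $C$, so $A_N-A_N$ already contains an arithmetic progression of bounded step and length $\asymp N$, giving density $\gtrsim 1/C$.

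The hardest step will be the second half of this dichotomy: converting the $N$-dependent near-AP structure supplied by Balog-Szemerédi-Gowers on the single set $A_N$ into a uniform bound on the ``common difference'' of the whole sequence $(n_k)$. I expect the cleanest way around this is to avoid Freiman entirely and argue directly by a telescoping/gap analysis: among the $K-1$ consecutive gaps $\delta_k:=n_{k+1}-n_k$ one has $\sum_k\delta_k\le N$, so a positive fraction of them are bounded by $\sqrt{N/C}\cdot O(1)$, and the resulting partial sums $n_{k+j}-n_k$ can be shown to contribute $\asymp N$ distinct values in $[1,N]$.
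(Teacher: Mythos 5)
Your opening reduction is sound as far as it goes: Cauchy--Schwarz against the representation function does give $|(A_N-A_N)\cap[1,N]|\ge\binom{K}{2}^2/E(A_N)$. But the dichotomy you build on it does not close. Since $|A_N|=K\asymp\sqrt{N/C}$, the additive energy always satisfies $E(A_N)\ge\binom{K}{2}\asymp N$ (diagonal quadruples alone) and can be as large as $K^3\asymp N^{3/2}$. Your first branch needs $E(A_N)\le C'N$ with $C'$ controlled, while Balog--Szemer\'edi--Gowers is only non-trivial when $E(A_N)\ge |A_N|^3/L$ with $L$ bounded or slowly growing, its losses being polynomial in $L$. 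In the intermediate regime, say $E(A_N)\asymp N^{5/4}$, the first branch yields only about $N^{3/4}$ distinct differences and the second yields a subset of size $K/L^{O(1)}$ with $L\asymp N^{1/4}$, i.e.\ nothing; the Freiman step inherits the hole. Your final ``telescoping/gap'' fallback is in fact essentially the paper's own (much more elementary) argument: the paper shows that at least $K/2$ of the consecutive gaps $\delta_k=n_{k+1}-n_k$, $k\le K$, satisfy $\delta_k\le 8Ck$, and reads off the density from this. But both your sketch and that argument still have to produce $\gtrsim K$ \emph{distinct} differences inside an interval of length $O(CK)$, and neither says why the small gaps (or the partial sums of gaps) cannot collide massively.

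This distinctness issue is not cosmetic: the statement as written appears to fail. Take $n_1=1$ and $\delta_k=2^{\lfloor\log_2 k\rfloor}$, so that $n_k\le k^2$ and the sequence is increasing. The points with index in $[2^m,2^{m+1}]$ form an arithmetic progression $B_m$ of step $2^m$ contained in an interval of length about $4^m$, and for $m\ge m'$ every element of $B_m-B_{m'}$ is congruent to $n_{2^m}-n_{2^{m'}}$ modulo $2^{m'}$, whence
\[
|B_m-B_{m'}|\;\le\;\min\Bigl(2^{m+m'+2},\;O\bigl(4^m/2^{m'}\bigr)\Bigr)=O\bigl(2^{3m/2}\bigr)\ \text{after summing over }m'\le m .
\]
Summing over $m\le m_0$ gives $|\{n_k-n_j:k\ge j\}\cap[0,4^{m_0}]|=O(M^{3/4})$ for $M=4^{m_0}$, so the difference set has lower density $0$ even though $n_k=O(k^2)$. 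So no strategy --- yours or the energy-free one --- can succeed without an extra hypothesis on $(n_k)$ beyond quadratic growth (in the intended application one would need to extract additional structure from the hitting sets supplied by the Bayart--Matheron construction). If you pursue this, the productive direction is not more additive combinatorics but identifying and exploiting that extra structure.
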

\begin{proof}
Let $C>0$ such that $n_k\leq Ck^2.$ Notice that $\#\{k\leq K:n_{k+1}-n_{k}\geq 8Ck\}\leq \f{K}{2}$. Indeed, if we suppose otherwise, then we get that
\begin{align*}
    n_K-n_1=\sum_{l=1}^K n_{l+1}-n_l\geq \sum_{l=1}^{\f{K}{2}}8Ck>CK^2
\end{align*}
which is a contradiction.
Hence, $\#\{k\leq K:n_{k+1}-n_{k}\leq 8Ck\}\geq \f{K}{2}$. 
This implies that 
$$\underline d \{n_k-n_j:k\geq j\}\geq \f{1}{16C}.$$
\end{proof}

\begin{proof}[Proof of Theorem \ref{frequently transitive non weakly mixing}]
Let $T$ be a non weakly mixing operator and $x$ such that for every nonempty open set $U$, $N_T(x,U)\in O(n^2)$. By the above lemma we get that for every nonempty open set $U$, $N_T(U,U)=N_T(x,U)- N_T(x,U)$ has positive lower density.

Consider now a pair of nonempty open sets $U,V$. Since $T$ is hypercyclic, there are $U'\sub U$ and $n$ such that $T^n(U')\sub V$. Hence $N_T(U',U')+n\sub N_T(U,V)$ and therefore $N_T(U,V)$ has positive lower density.
\end{proof}

We would like to end the section with an open question related to Theorems \ref{caract syndet} and \ref{caract piece}.

\textbf {Question.} Let $T$ be a piecewise syndetically linear operator. Is $T$ weakly mixing?

\begin{remark}
 The techniques used in Theorems \ref{Sidon non weakly} and \ref{frequently transitive non weakly mixing} do not provide non weakly mixing but piecewise syndetically transitive operators. Indeed, Bayart and Matheron construction relies on the existence of a $\Delta_l$-Sidon sequence $b_n$ such that for every nonempty open set $U$, there is $(b_{n_k})_k$ such that $b_{n_k}\in N(e_1,U)$ for every $k$. However, the $\Delta_l$-Sidon structure of $b_n$ implies that the set of differences $\{b_{n}-b_{n'}:n>n'\}$ is not piecewise syndetic. In fact, from this observation and the proof of Theorem \ref{Bayart Matheron} it follows that given a sequence of natural numbers $(m_k)_k$ with $\f{m_k}{k}\to \infty$, there is $n_k\in O(m_k)$ such that the set of differences $\{n_k-n_{k'}:k>k'\}$ is not piecewise syndetic.
\end{remark}

\end{document}